\newtheorem{theorem}{Theorem}
\newtheorem{conjecture}[theorem]{Conjecture}
\newtheorem{problem}[theorem]{Problem}
\newtheorem{proposition}[theorem]{Proposition}
\newtheorem*{LPR}{Lov\'{a}sz Path Removal Conjecture}
\newtheorem{lemma}[theorem]{Lemma}
\theoremstyle{definition}
\newtheorem{definition}[theorem]{Definition}
\newtheorem{claim}{Claim}[theorem]
\numberwithin{theorem}{section}
\newcommand\A{\mathcal{A}}
\newcommand\rmint{\textrm{int}}
\newcommand\rmend{\textrm{end}}
\newcommand\blfootnote[1]{%
  \begingroup
  \renewcommand\thefootnote{}\footnote{#1}%
  \addtocounter{footnote}{-1}%
  \endgroup
}
\renewcommand{\thefootnote}{\fnsymbol{footnote}}
\title{7-Connected Graphs are 4-Ordered}
\author{Rose McCarty, Yan Wang, Xingxing Yu}
\begin{document}
\blfootnote{\textit{Mathematics Subject Classification} (2010): 05C38, 05C40.}

\centerline{\LARGE 7-Connected Graphs are 4-Ordered}
\bigskip\bigskip
\centerline{Rose McCarty\footnote{Now at the Department of Combinatorics and Optimization, University of Waterloo. Partially supported by NSF grant DMS-1265564 through X. Yu}, Yan Wang\footnote{Partially supported by NSF grant DMS-1600738 through X. Yu}, Xingxing Yu\footnote{Partially supported by NSF grants DMS-1265564 and DMS-1600738}}
\bigskip
\centerline{School of Mathematics}
\centerline{Georgia Institute of Technology}
\centerline{Atlanta, GA 30332}
\bigskip

\begin{abstract}
\noindent A graph $G$ is $k$-ordered if for any distinct vertices $v_1, v_2, \ldots, v_k \in V(G)$, it has a cycle through $v_1, v_2, \ldots, v_k$ in order. Let $f(k)$ denote the minimum integer so that every $f(k)$-connected graph is $k$-ordered. The first non-trivial case of determining $f(k)$ is when $k=4$, where the previously best known bounds are $7 \leq f(4) \leq 40$. We prove that in fact $f(4)=7$.
\end{abstract}

\textbf{Keywords:} cycles; connectivity; graph linkages; $k$-ordered graphs

\section{Introduction}The problem of studying connectivity and cycles through specified vertices originates with the classic result \cite{Dirac} that $k$-connected graphs have a cycle through any set of $k$ vertices for $k \geq 2$. Specifying the order the vertices must appear on the cycle is a generalization introduced in \cite{NgSchultz}. The case $k = 4$ is particularly interesting because it is the first case where the ordered and unordered versions differ. Surveys on the existence of cycles through specified vertices can be found in \cite{Faudree} and \cite{Gould}.

Following the terminology of Faudree \cite{Faudree}, for a positive integer $k$, we say a graph is \textit{$k$-ordered} if, for distinct vertices $v_1, v_2, \ldots, v_k$, the graph has a cycle through $v_1, v_2, \ldots, v_k$ in order. We define $f(k)$ as the smallest positive integer so that every $f(k)$-connected graph is $k$-ordered. Clearly, $f(1)=f(2)=2$ and $f(3)=3$. 
Faudree asks for the determination of $f(4)$ in \cite{Faudree}. Goddard \cite{Goddard} and Mukae \textit{et al.}  \cite{Mukae} have short proofs showing that $4$-connected triangulations of surfaces are $4$-ordered.

The best known upper bounds for $f(4)$, and for $f(k)$ in general, follow from work on linkages. We say that a graph $G$ is $k$-\textit{linked} if for any collection of $k$ pairs of vertices $\{\{s_i,t_i\}: i \in \{1,2,\ldots,k\}\}$, there exists a collection $\{P_i: i\in \{1,2,\ldots, k\}\}$ with the path $P_i$ from $s_i$ to $t_i$ for all $i\in \{1,\ldots, k\}$ such that, for any distinct $i,j \in \{1,2,\ldots, k\}$, no vertex of $P_i$ is an internal vertex of $P_j$.
Let $g(k)$ denote the smallest positive integer so that every $g(k)$-connected graph is $k$-linked. From the definitions, if a graph is $k$-linked then it is also $k$-ordered (by considering the $k$-pairs $\{v_1,v_2\}, \{v_2,v_3\}, \ldots, \{v_k,v_1\}$). 
Thus it follows that $f(k) \leq g(k)$ for every $k$.

Bollob\'{a}s and Thomason \cite{Bollobas-Thomason} were the first to show that $g(k)$ is linear in $k$, with $g(k)\le 22k$. Kawarabayashi, Kostochka, and G. Yu \cite{Kawarabayashi} improved this to $g(k)\le 12k$, and Thomas and Wollan \cite{Thomas-Wollan} showed further that $g(k)\le 10k$. Better bounds on $g(k)$ are known for $k=3$ and large graphs of bounded tree-width, see \cite{3Linked} and \cite{Frohlich}. 

For the general case where $k \geq 4$, the upper bound of $10k$ is currently the best known on both $g(k)$ and $f(k)$ as far as the authors are aware. So in particular the previously best known upper bounds for $k=4$ are $f(4) \leq g(4)\leq 40$.
Ellingham, Plummer, and G. Yu \cite{Ellingham} proved the following result, implying $f(4)\ge 7$.

\begin{theorem}
\label{lowerBound}\cite{Ellingham}
There exists a 6-connected graph $G$ and distinct vertices $v_1, v_2, v_3,$ $v_4 \in V(G)$ so that $G$ does not contain a path through the vertices $v_1, v_2, v_3, v_4$ in order.
\end{theorem}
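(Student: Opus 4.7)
My plan is to exhibit an explicit $6$-connected graph $G$, choose four vertices $v_1,v_2,v_3,v_4\in V(G)$, and then verify (i) $G$ is $6$-connected and (ii) $G$ contains no path visiting $v_1,v_2,v_3,v_4$ in order. Since the theorem is an existence statement, everything reduces to producing one counterexample, so I would aim for a small, highly symmetric graph whose structure is easy to audit by hand.

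For the construction, the natural template is to take two small ``pendant'' cliques $H_1, H_2$ glued along a common $6$-vertex hub $S$, place $\{v_1,v_3\}$ in $H_1\setminus S$ and $\{v_2,v_4\}$ in $H_2\setminus S$, and attach the terminals to $S$ (and possibly to each other) so that every vertex has degree at least $6$. The heuristic is that $6$-connectivity is forced by the join-type edges through $S$, while any path visiting the terminals in the sequence $v_1,v_2,v_3,v_4$ must toggle between the $H_1$- and $H_2$-sides three times, which overcommits the hub. A second candidate template worth trying is a blow-up or cloning of a small base graph on $\sim\!10$--$12$ vertices, with the terminals chosen antipodally in the base.

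To verify $6$-connectivity I would rule out $5$-cuts by finite case analysis on which vertices of $S$ and of $H_1\cup H_2$ are removed; the clique-like structure of $S$ combined with minimum degree $6$ should yield a contradiction in each case. To verify non-orderability, I would assume a path $P$ visits the terminals in order, split $P$ into three internally disjoint subpaths $P_{12}, P_{23}, P_{34}$ with consecutive terminals as endpoints, and argue that each $P_{ij}$ crosses $S$ using ``entry'' and ``exit'' vertices that must be pairwise distinct across the three subpaths. Tracking how these transits interlace, the linear order $v_1,v_2,v_3,v_4$ forces a routing pattern through $S$ that cannot be realized by three internally disjoint subpaths inside the small pendants, giving the required contradiction.

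The main obstacle is step (ii): isolating the correct interlacing obstruction while simultaneously tuning $H_1, H_2$ so that $G$ really is $6$-connected. The subtlety is that naive candidates (for instance, the join of $K_6$ with $\overline{K_4}$, or $K_{6,n}$) are easily seen to be $4$-ordered, so the pendants must be \emph{just} restrictive enough to block interlacing without sacrificing enough disjoint $S$-routes to destroy $6$-connectivity. This tension mirrors the known gap between $3$-linkage, which holds for $6$-connected graphs, and $4$-orderedness; finding the smallest graph living in that gap, and pinning down a clean combinatorial witness that the ordered path is impossible, is the crux of the argument.
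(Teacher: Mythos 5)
There is a genuine gap: the theorem is a pure existence statement, and your proposal never produces the object whose existence is claimed. You do not write down a specific graph $G$, do not fix the four vertices, and do not carry out either of the two verifications you yourself identify as necessary (the $5$-cut analysis and the non-orderability argument). What you give is a search strategy --- two candidate templates and a plan for an interlacing argument --- together with an explicit admission that the crux, namely tuning the construction so that it is simultaneously $6$-connected and not traversable in the order $v_1,v_2,v_3,v_4$, remains unresolved. That crux is the entire content of the theorem, so this is not a fixable detail but a missing proof. (For context: the paper does not prove this statement either; it quotes it from Ellingham, Plummer and G.~Yu, so a blind proof really does have to exhibit and verify a concrete example.)

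Moreover, the hub template you sketch is unlikely to deliver the obstruction. If $S$ is a $6$-vertex hub separating $\{v_1,v_3\}$ from $\{v_2,v_4\}$, a path visiting the terminals in order splits into three internally disjoint segments, each of which needs only one transit vertex in $S$; distinct transits therefore consume at most three of the six hub vertices, so no counting or interlacing argument on $S$ alone can block the path, and in the clique-like versions of your template the ordered path plainly exists (as you concede for the naive candidates). The obstruction in known examples comes from finer structure --- essentially a $2$-linkage/$3$-planarity-type obstruction in the graph that remains after removing suitable pieces, the same phenomenon this paper exploits via Seymour's theorem --- and your proposal does not identify any mechanism of that kind, let alone verify it for a concrete graph.
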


The main result of this paper is the following which, combined with Theorem~\ref{lowerBound}, shows that $f(4)=7$.

\begin{theorem}
\label{main}
Every 7-connected graph is 4-ordered.
\end{theorem}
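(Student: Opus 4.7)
The natural strategy is to reduce the 4-ordered cycle problem to a pair of 2-linkage problems. Any cycle through $v_1, v_2, v_3, v_4$ in order decomposes into four internally disjoint paths $P_{12}, P_{23}, P_{34}, P_{41}$, where $P_{ij}$ joins $v_i$ and $v_j$; regrouping, the task splits into first finding a 2-linkage $(P_{12}, P_{34})$ for the pairs $\{v_1,v_2\}$, $\{v_3,v_4\}$, and then a second 2-linkage $(P_{23}, P_{41})$ for the pairs $\{v_2,v_3\}$, $\{v_4,v_1\}$ inside the graph obtained by deleting the internal vertices of the first. Since 7-connectivity is far more than needed for a single 2-linkage (by the Seymour--Thomassen characterization, or even the specialization of the Thomas--Wollan bound to $k=2$), we have enormous flexibility in the first linkage; the real task is to choose it so that the second one survives in the residual graph.

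I would argue by contradiction using a minimum counterexample $(G; v_1,v_2,v_3,v_4)$ with $|V(G)|+|E(G)|$ minimum. Standard minimality arguments should yield: $G$ has minimum degree exactly $7$; $G$ contains no contractible edge off the terminals that produces a smaller 7-connected counterexample; and every vertex cut $S$ of size at most $6$ must interact with $\{v_1,v_2,v_3,v_4\}$ in a controlled way, since otherwise one could contract a small side of $S$ to produce a smaller counterexample. These reductions should leave the four distinguished vertices embedded in a nearly 7-connected core with no trivial obstructions.

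The core of the argument is then to choose the first 2-linkage $(P_{12}, P_{34})$ optimizing a combinatorial quantity; for example, minimize $|V(P_{12})|+|V(P_{34})|$, breaking ties by a secondary measure of how badly the residual graph $G' := G - \bigl((V(P_{12})\cup V(P_{34}))\setminus\{v_1,v_2,v_3,v_4\}\bigr)$ violates the second 2-linkage requirement. By the Seymour--Thomassen characterization of 2-linked graphs, if $G'$ fails to contain the $(v_2,v_3),(v_4,v_1)$ 2-linkage then $G'$ either admits a small separation misplacing the four terminals, or contains a planar Kuratowski-type obstruction exhibiting $v_1, v_2, v_3, v_4$ on a common face in the wrong cyclic order. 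The hard part will be showing that every such obstruction in $G'$ can be lifted back to $G$ to produce either an alternative 2-linkage $(P_{12}', P_{34}')$ contradicting the optimality of the choice, or a vertex cut of $G$ of size at most $6$, contradicting the 7-connectivity. Handling the planar exception is expected to be the most delicate step: here I would exploit the four extra units of connectivity that $G$ has beyond the baseline needed for a single 2-linkage, rerouting $P_{12}$ or $P_{34}$ through the planar region to strictly improve the optimization parameter. The specific constant $7$ should enter essentially at precisely this rerouting step, via a Lov\'asz-type path-removal argument.
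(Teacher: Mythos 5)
Your opening reduction is the same one the paper uses: remove a first structure joining $v_1$ to $v_2$ and $v_3$ to $v_4$, observe that the second linkage must fail in the residual graph, and invoke the Seymour--Thomassen characterization so that the residual graph is ``almost planar.'' (The paper removes two disjoint \emph{cycles} $C_0\ni c_0,c_1$ and $C_2\ni c_2,c_3$ rather than two paths, and seeks the second linkage between neighbors $z_i$ of the $c_i$ inside $H=G-(V(C_0)\cup V(C_2))$, but the idea is the same.) However, your proposal stops exactly where the real work begins, and the step you defer (``every such obstruction in $G'$ can be lifted back\dots'') is not something minimality of the first linkage plus generic rerouting will deliver. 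The 3-planar outcome of Seymour's theorem is only exploitable if the residual graph has enough connectivity: if $G'$ has cutvertices or $2$-cuts, the decomposition can hide arbitrary pieces behind $\le 3$-cuts and there is no face structure to reroute through. The paper spends three sections choosing the removed structure --- a ``skeleton'' consisting of $C_0$, $C_2$, a third cycle $Z$, and four attachment edges $c_iz_i$, optimized with respect to several nested extremal criteria, not just total size --- precisely so that $H$ becomes $3$-connected; only then do minimality of the 3-planar family $\mathcal{A}$ and $7$-connectivity force $H$ to be genuinely planar with all attachments of $C_0\cup C_2$ on its outer cycle. Your single optimization (minimize $|V(P_{12})|+|V(P_{34})|$, with an unspecified tie-break) gives no handle on these low-order cuts, and it is not clear that any rerouting strictly improves your parameter when the obstruction is a $1$- or $2$-cut in the residual graph rather than a clean planar drawing.

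Moreover, the final contradiction in the paper is not a Lov\'asz-type path-removal or rerouting step at all: once $H$ is $3$-connected and planar with the attachments on the outer cycle, a discharging lemma produces either an interior vertex of degree at most $6$ (impossible, by $7$-connectivity and the fact that interior vertices have no neighbors on $C_0\cup C_2$) or an edge $uv$ of the outer cycle with $d_H(u)+d_H(v)\le 7$; the latter forces $u,v$ to have many neighbors on $C_0$, and a ``separating pair'' analysis on $C_0$ (Section 2 of the paper) then manufactures the forbidden ordered cycle. That degree count is where the constant $7$ actually enters --- not as extra slack for rerouting a 2-linkage. Finally, some of your preliminary reductions are unjustified: a counterexample minimizing $|V(G)|+|E(G)|$ does not have minimum degree exactly $7$ (deleting an edge need not preserve $7$-connectivity, so no such edge deletion is available), and contraction-based reductions are unavailable because $k$-connected graphs with $k\ge 4$ need not contain contractible edges avoiding the terminals. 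So while your starting decomposition matches the paper's, the proposal has a genuine gap at its central step: it supplies no mechanism for handling the low-connectivity and planar obstructions that constitute essentially all of the paper's proof.
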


Using a precise structure theorem of X. Yu \cite{Yu-Disjoint1, Yu-Disjoint2, Yu-Disjoint3}, Ellingham, Plummer, and G. Yu  \cite{Ellingham} also proved the following. 

\begin{theorem}\cite{Ellingham}\label{ellingham}
For every 7-connected graph $G$ with distinct vertices $v_1, v_2, v_3, v_4 \in V(G)$, the graph $G$ has a path through $v_1, v_2, v_3, v_4$ in order. 
\end{theorem}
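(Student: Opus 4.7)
My plan is to translate the existence of a $v_1v_2v_3v_4$-ordered path into a three-linkage problem in an auxiliary graph, invoke X.~Yu's structural characterization of obstructions to three disjoint paths, and then use the 7-connectivity hypothesis to rule out the resulting obstruction.

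A path through $v_1, v_2, v_3, v_4$ in this order is precisely the concatenation of three internally vertex-disjoint paths $P_1, P_2, P_3$, where $P_i$ joins $v_i$ to $v_{i+1}$ and has no internal vertex in $\{v_1, v_2, v_3, v_4\}$. To obtain demand pairs with pairwise disjoint endpoints (and to match the setup of a standard three-linkage), I would split each of $v_2$ and $v_3$ into two copies joined by an edge (say $v_2^-, v_2^+$ and $v_3^-, v_3^+$), distributing their neighbors between the two copies so as to preserve connectivity as much as possible; a standard submodularity argument shows this split can be done so that the local connectivity at $\{v_2^-,v_2^+,v_3^-,v_3^+\}$ drops only by a controlled amount. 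In the resulting graph $G'$, the required path in $G$ exists if and only if $G'$ admits a linkage realizing the demand pairs $(v_1, v_2^-), (v_2^+, v_3^-), (v_3^+, v_4)$.

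Assuming no such linkage exists in $G'$, I would invoke the three-paper theorem of X.~Yu \cite{Yu-Disjoint1, Yu-Disjoint2, Yu-Disjoint3}, which gives a complete list of obstructions to a three-linkage. Each obstruction delivers a separator $S$ of bounded size together with a near-planar embedding of a part of $G'$ in which the six terminals lie on a common face in a cyclic order incompatible with the prescribed demand pairs. Undoing the splits of $v_2$ and $v_3$ converts this into a small vertex cut in $G$ whose size is bounded by a small constant depending on how many of the prescribed terminals are absorbed into $S$.

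The last step is to rule out each branch of X.~Yu's dichotomy using $\kappa(G)\ge 7$. In every structural case, a careful bookkeeping of the obstructing cut (accounting for the four original terminals, the two split edges, and the topology of the embedding) bounds the cut size by at most six, contradicting 7-connectivity. The hard part will be precisely this case analysis: X.~Yu's characterization presents several topological types (three vertices on a boundary cycle, four vertices on a common face in reversed order, and various near-planar degenerations), and each must be individually matched against the numerical threshold~$7$. The planar case, where $G'$ is essentially planar with the six terminals on a common face arranged so that the demanded linkage is topologically forbidden, will require the most delicate counting, since one must also verify that the connectivity lost in the splitting step is recovered using the edges $v_2^-v_2^+$ and $v_3^-v_3^+$ when translating back to $G$.
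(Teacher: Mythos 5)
The paper does not actually prove this statement; it quotes it from Ellingham, Plummer and G.~Yu \cite{Ellingham}, whose proof rests on X.~Yu's precise structure theorem, so your overall instinct---invoke that characterization and rule out its obstructions using $7$-connectivity---matches the known route. But as written your argument has genuine gaps. First, you invoke the trilogy \cite{Yu-Disjoint1,Yu-Disjoint2,Yu-Disjoint3} as ``a complete list of obstructions to a three-linkage.'' No characterization of general $3$-linkages is known; what the trilogy characterizes is precisely when a graph has a path through four prescribed vertices in a given order. So the reduction via splitting $v_2$ and $v_3$ is both unnecessary (the theorem applies directly to $G$ with $v_1,v_2,v_3,v_4$) and, in the form you describe, unsound: if you distribute the neighbors of $v_2$ between $v_2^-$ and $v_2^+$, an ordered path in $G$ need not yield a linkage in $G'$ (its two edges at $v_2$ may attach to the wrong copies), so the equivalence you assert fails, and the appeal to ``a standard submodularity argument'' to control the connectivity loss is not a proof. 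The usual fix is to give both copies the full neighborhood of $v_2$ (cloning preserves connectivity and the equivalence), but then you are simply re-deriving the ordered-path formulation that the trilogy already treats directly.

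Second, and more seriously, the entire mathematical content---showing that every obstruction in Yu's characterization forces a vertex cut of size at most six, hence cannot occur in a $7$-connected graph---is asserted rather than carried out. Yu's characterization is not a single ``bounded separator plus six terminals on a common face'' statement; it is an elaborate structure built from $3$-planar pieces and reductions, and verifying the numerical threshold $7$ against each structural type is exactly the work done in \cite{Ellingham}. Without that case analysis (or at least a precise statement of the characterization and a worked bound for each type), your proposal is a plan for a proof, not a proof.
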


Theorem~\ref{main} shows that no additional connectivity is required to guarantee the existence of a cycle through four vertices in order.

\subsection*{Proof Overview}
Suppose that $G$ is a $7$-connected  graph with four distinct vertices $c_0, c_1, c_2, \allowbreak c_3 \in V(G)$ that has no cycle through $c_0, c_1, c_2, c_3$ in order. Our approach is to find vertex-disjoint cycles $C_0$ and $C_2$ and a set $\{z_0, z_1, z_2, z_3\}\subseteq V(G)\setminus (V(C_0)\cup V(C_2))$ so that $c_0, c_1 \in V(C_0)$, $c_2, c_3 \in V(C_2)$, and for $i=0,1,2,3$, the vertex $z_i$ is a neighbor of $c_i$. Define $H$ to be the graph $G-(V(C_0)\cup V(C_2))$. Then $H$ does not have vertex-disjoint paths, with one from $z_0$ to $z_3$ and the other from $z_1$ to $z_2$. 
So, applying Seymour's characterization on the existence of $2$-linkages \cite{seymour}, the graph $H$ is ``almost'' planar and thus has small cuts. 

We will show how to extend these small cuts to a small cut of $G$ in Section \ref{separatingPairs}. We introduce the notion of ``separating pairs'' which are certain subpaths of the cycles $C_0,C_2$. 
The ends of these paths will be used to extend cuts. We prove some lemmas that help us find paths through certain vertices in case cuts do not extend. 

In Section \ref{3Planar} we introduce the notion of ``3-planar graphs'' to explain precisely what we mean by saying that ``$H$ is almost planar''. 
We will state Seymour's characterization of 2-linked graphs, and prove some lemmas on 3-planar graphs. 

In Section \ref{skeletons}, we show that if $G$ is $7$-connected then it has a special structure called a ``$(c_0, c_1, c_2, c_3)$-skeleton''. This structure allows us to assume that $H$ has no $1$-cut separating two of the vertices in $\{z_0, z_1, z_2, z_3\}$ from the rest. Using this notion of skeletons, in Section \ref{2Conn} we show that we can assume $H$ is $2$-connected. In Section \ref{3Conn} we use different techniques to show that $H$ is $3$-connected.

In Section \ref{final} we first prove a discharging lemma on plane graphs. Then we use the fact that $H$ is $3$-connected and $3$-planar, and the lemmas from Section \ref{3Planar}, to show that in fact $H$ is planar, and all neighbors of $C_0$ and $C_2$ are on the boundary of a single face. We then use the discharging lemma to force a special configuration in $H$, which we can use to find the desired cycle in $G$. 
We conclude and give some additional remarks in Section \ref{conclude}.

\subsection*{Notation}
We conclude this section with notation and terminology we need in the rest of the paper. Suppose $G$ is a graph and $P$ is a path in $G$. Then define $\rmend(P)$ to be the set of vertices of smallest degree of $P$. Define $\rmint(P) \coloneqq V(P)\setminus\rmend(P)$. Notice that if $P$ has two or fewer vertices, then $\rmint(P) = \emptyset$. Given a cycle $C$ and an orientation of $C$, for any distinct $u,v\in V(C)$, we use 
$C[u,v]$ to denote the subpath of $C$ from $u$ to $v$ in clockwise order. Let $C(u,v]:=C[u,v]-u$,  $C[u,v):=C[u,v]-v$, and $C(u,v):=C[u,v]-\{u,v\}$. We use similar notation for subpaths of $P$.

\section{Separating Pairs}
\label{separatingPairs}
\begin{definition}
Let $G$ be a graph, let $C$ be a cycle in $G$,  let $v_0,v_1\in V(C)$ be distinct, and let $A\subseteq V(G)\setminus V(C)$. Then a $(v_0, v_1, C, A)$\textit{-separating pair} is a set of paths $\{R_0, R_1\}$ such that there exists an orientation of $C$ so that
\begin{enumerate}
\item for $i = 0,1$, $R_i$ is a subpath of $C[v_i, v_{1-i}]$,
\item for $i=0,1$, $N(A) \cap V(C[v_i, v_{1-i}]) \subseteq V(R_i)$, and
\item the graph $G$ has no edge $uv$ such that $u \in \rmint(R_0)\cup \rmint(R_1)$ and $v \in V(C)\setminus(V(R_0)\cup V(R_1))$. 
\end{enumerate}
A \textit{minimum $(v_0, v_1, C, A)$-separating pair} is a $(v_0, v_1, C, A)$-separating pair $\{R_0, R_1\}$ so that $|V(R_0)|+|V(R_1)|$ is minimum. For all $i,j \in \{0,1\}$, if $R_i\ne \emptyset$  then define $r_i^j$ to be the end of $R_i$ closest to $v_j$ on $C[v_i, v_{1-i}]$.  (Thus, $r_i^0 = r_i^1$ if $R_i$ consists of a single vertex.)
\end{definition}

Clearly, a $(v_0, v_1, C, A)$-separating pair exists as the two paths in $C$ between $v_0$ and $v_1$ form such a pair. Later in the paper we often construct small cutsets containing the set $\rmend(R_0) \cup \rmend(R_1)$. The separating pairs we use will always be chosen to be minimum. The following lemma shows that if $G[V(C)]$ contains no cycle through $v_0$ and $v_1$ that is shorter than $C$, then minimum separating pairs satisfy some additional properties.
\begin{lemma}
\label{mainLemma}
Let $G$ be a graph, let $C$ be a cycle in $G$,  let $v_0,v_1\in V(C)$ be distinct, and let $A\subseteq V(G)\setminus V(C)$. Let $\{R_0, R_1\}$ be a minimum $(v_0, v_1, C, A)$-separating pair. Suppose that $G[V(C)]$ contains no cycle through $v_0$ and $v_1$ with fewer vertices than $C$. Then for each choice $i,j \in \{0,1\}$, if $R_i\ne \emptyset$ then either 
\begin{enumerate}
\item $N(r_i^j) \cap A \neq \emptyset$, or 
\item  $R_{1-i}\ne \emptyset$, $N(r_i^j)\cap \rmint(R_{1-i}) \neq \emptyset$, and $N(r_{1-i}^j) \cap A \neq \emptyset$. 
\end{enumerate}
\end{lemma}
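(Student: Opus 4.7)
The plan is to argue by contradiction: suppose $R_i \ne \emptyset$ and for some $i,j$ neither (i) nor (ii) holds; derive either a smaller $(v_0, v_1, C, A)$-separating pair (contradicting minimality of $\{R_0,R_1\}$) or a cycle through $v_0, v_1$ in $G[V(C)]$ with fewer vertices than $C$ (contradicting the shortest-cycle hypothesis). By symmetry I may take $i=j=0$. Then $r_0^0 \notin N(A)$ by the failure of (i), and the failure of (ii) means at least one of the following holds: $R_1 = \emptyset$; $N(r_0^0) \cap \rmint(R_1) = \emptyset$; or $r_1^0 \notin N(A)$.

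The central local move is to delete the endpoint $r_0^0$: set $R_0' := R_0 - r_0^0$. Because $r_0^0 \notin N(A)$, the pair $\{R_0', R_1\}$ satisfies conditions (1) and (2) of the separating-pair definition; condition (3) fails only if $r_0^0$ has a neighbor in $\rmint(R_0') \cup \rmint(R_1)$. A neighbor $a$ of $r_0^0$ in $\rmint(R_0')$ is an internal vertex of $R_0$ joined to $r_0^0$ by a chord of $C$; replacing the sub-arc of $R_0$ between $r_0^0$ and $a$ by this chord produces a cycle in $G[V(C)]$ through $v_0,v_1$ strictly shorter than $C$ (the argument works uniformly, including the degenerate situation $r_0^0 = v_0$), contradicting the hypothesis. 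So $N(r_0^0) \cap \rmint(R_0') = \emptyset$. In the first two (ii)-failure subcases (where $R_1 = \emptyset$ or $N(r_0^0) \cap \rmint(R_1) = \emptyset$), this leaves $\{R_0', R_1\}$ as a strictly smaller separating pair, contradicting minimality.

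The remaining subcase is that $r_0^0$ has a neighbor $x \in \rmint(R_1)$ while $r_1^0 \notin N(A)$. I apply the symmetric move to $R_1$: set $R_1' := R_1 - r_1^0$. By the analog of the chord argument inside $R_1$, $N(r_1^0) \cap \rmint(R_1') = \emptyset$. If additionally $N(r_1^0) \cap \rmint(R_0) = \emptyset$, the pair $\{R_0, R_1'\}$ is a smaller separating pair and we are done. Otherwise $r_1^0$ has a neighbor $y \in \rmint(R_0)$, and I combine the chords $r_0^0 x$ and $y\,r_1^0$ into a cycle through $v_0, v_1$ in $G[V(C)]$: go from $v_0$ along $C$ to $r_0^0$, across the chord to $x$, back along $R_1$ to $v_1$, along $C$ back to $r_0^1$ and then along $R_0$ to $y$, across the chord to $r_1^0$, and back along $C$ to $v_0$. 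This cycle omits exactly the internal vertices of the sub-arc of $R_0$ between $r_0^0$ and $y$ and of the sub-arc of $R_1$ between $x$ and $r_1^0$; so unless $y$ is the $C$-neighbor of $r_0^0$ on $R_0$ and $x$ is the $C$-neighbor of $r_1^0$ on $R_1$, the new cycle is strictly shorter than $C$, a contradiction.

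This leaves only the crossing edge-configuration in which every neighbor of $r_0^0$ in $\rmint(R_1)$ is the $C$-neighbor of $r_1^0$ on $R_1$, and every neighbor of $r_1^0$ in $\rmint(R_0)$ is the $C$-neighbor of $r_0^0$ on $R_0$. Here I consider the doubly-shrunk pair $\{R_0', R_1'\}$. The chord arguments inside $R_0$ and inside $R_1$ rule out neighbors of $r_0^0$ in $\rmint(R_0')$ and of $r_1^0$ in $\rmint(R_1')$, while the edge-case constraints rule out neighbors of $r_0^0$ in $\rmint(R_1')$ and of $r_1^0$ in $\rmint(R_0')$. Combined with $r_0^0, r_1^0 \notin N(A)$, this shows $\{R_0', R_1'\}$ is a separating pair with two fewer vertices than $\{R_0, R_1\}$, contradicting minimality. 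The main obstacle is precisely this crossing case: verifying condition (3) for the double shrink requires simultaneously discharging in-chords (via the shortest-cycle hypothesis) and cross-chords (via the edge-case assumption) from both $r_0^0$ and $r_1^0$, and checking that the pair really does shrink by two rather than splintering into further cases.
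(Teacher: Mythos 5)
Your proof is correct and follows essentially the same strategy as the paper: delete the ends $r_0^0$, $r_1^0$, use the no-shorter-cycle hypothesis to exclude chords within $R_0$ and $R_1$, build the crossing cycle through $v_0,v_1$ to force the cross-edges to land on the $C$-neighbors of the deleted ends, and contradict minimality with the doubly-shrunk pair. The only cosmetic difference is that you run the crossing-cycle argument over all cross-chords, whereas the paper fixes extremal neighbors $u_0,u_1$ (closest to $v_1$) and draws the same conclusion.
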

\begin{proof}
For convenience, let $P_0,P_1$ denote the two paths in $C$ between $v_0$ and $v_1$. Without loss of generality, assume that $R_i\subseteq P_i$ for $i=0,1$. 
For $i,j \in \{0,1\}$, if $R_i\ne \emptyset$ let $r_i^j$ be the end of $R_i$ closest to $v_j$ on $P_i$ (with possibly $r_i^0 = r_i^1$). Notice that by \textit{(ii)} of the definition of a separating pair, for $j=0,1$, if $v_j \in N(A)$ then $v_j \in V(R_0)\cap V(R_1)$.

We claim that for  $k,l \in \{0,1\}$ for which $r_k^l$ is defined and $N(r_k^l)\cap A = \emptyset$, we have $R_{1-k}\ne\emptyset$ and $N(r_k^l) \cap \rmint(R_{1-k}) \neq \emptyset$. By symmetry, we assume $r_0^0$ is defined and $N(r_0^0)\cap A = \emptyset$. By the choice of $\{R_0,R_1\}$, $\{R_0-r_0^0, R_1\}$ is not a $(v_0, v_1, C, A)$-separating pair. Hence, there exists $uv \in E(G)$ with $u \in \rmint(R_0-r_0^0)\cup \rmint({R_{1}})$ and $v \in V(C)\setminus(V(R_0-r_0^0)\cup V(R_{1}))$. In particular, $u\in  \rmint(R_0)\cup \rmint(R_1)$. Hence, $v = r_0^0$, since $\{R_0,R_1\}$ is a $(v_0, v_1, C, A)$-separating pair. If $u\in \rmint(R_0-r_0^0)$ then $G[V(C)]$ contains a cycle through $v_0$ and $v_1$ shorter than $C$, a contradiction. So $u \in \rmint(R_{1})$.  

By symmetry, it suffices to prove the assertion for the case $i=0$ and $j=0$. Thus, assume 
that $r_0^0$ is defined and $N(r_0^0)\cap A = \emptyset$. Then by the above claim, $R_{1}\ne\emptyset$ and $N(r_0^0) \cap \rmint(R_{1}) \neq \emptyset$. If $N(r_1^0)\cap A \ne \emptyset$ then we are done. So assume $N(r_1^0)\cap A=\emptyset$. Then by the above claim,  $N(r_1^0) \cap \rmint(R_{0}) \neq \emptyset$. 

Let $u_0\in N(r_1^0)\cap \rmint(R_{0})$ with $P_0[u_0,v_1]$ minimal, $u_1\in N(r_0^0)\cap \rmint(R_{1})$ with $P_1[u_1,v_1]$ minimal, and let $C'$ be the cycle through $v_0$ and $v_1$ defined as follows
$$P_0[v_0,r_0^0]\cup r_0u_1\cup P_1[u_1,v_1]\cup P_0[v_1,u_0]\cup u_0r_1^0\cup P_1[r_1^0,v_0].$$
Then  $V(C') \subseteq V(C)$,  and $V(C)\setminus V(C') = \rmint(P_0[u_0,r_0^0])\cup \rmint(P_{1}[u_{1}, r_{1}^0])$. Hence, $P_0[r_0^0,u_0]=r_0^0u_0$ and $P_1[r_1^0,u_1]=r_1^0u_1$, as $G[V(C)]$ contains no cycle through $v_0$ and $v_1$ and shorter than $C$.  

Let $R_0'=R_0-r_0^0$ and $R_1'=R_1-r_1^0$. Clearly, $R_i'\subseteq P_i$ for $i\in \{0,1\}$, and $N(A)\cap V(C)\subseteq V(R_0')\cup V(R_1')$. By the choice of $u_0$ and $u_1$, we see that $G$ has no edge from $\rmint(R_0')\cup \rmint(R_1')$ to $V(C)\setminus V(R_0'\cup R_1')$. So $\{R_0',R_1'\}$ is a $(v_0, v_1, C, A)$-separating pair, contradicting the choice of $\{R_0,R_1\}$.  
\end{proof}

Now we prove a technical lemma on the existence of several types of paths and cycles in $G[V(C) \cup A]$. The proof is tedious case analysis, but we will use this lemma frequently.

\begin{lemma}
\label{pathExistence}
Let $G$ be a graph, let $C$ be a cycle in $G$, let $v_0,v_1\in V(C)$ be distinct, and let $A\subseteq V(G)\setminus V(C)$. Let $\{R_0, R_1\}$ be a minimum $(v_0, v_1, C, A)$-separating pair and let $u_0 \in \rmint(R_0)$. Suppose that $G[V(C)]$ contains no cycle through $v_0$ and $v_1$ with fewer vertices than $C$. Then 
\begin{enumerate}
\item there exists  $a \in V(C)$ with $N(a) \cap A \neq \emptyset$ such that the graph $G[V(C)]$ contains a path through $u_0, v_0, v_1, a$ in order,
\item if $G[A]$ is connected then $G[V(C) \cup A]-\rmint(R_0)$ contains a cycle though $v_0$ and $v_1$, and
\item if $G[A]$ is connected then for every vertex $u_1 \in \rmint(R_1)$, $G[V(C) \cup A]$ contains a path through $u_0, v_0, v_1, u_1$ in order.
\end{enumerate}
\end{lemma}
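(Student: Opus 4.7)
The plan is to prove all three parts by applying Lemma~\ref{mainLemma} at the ends of $R_0, R_1$ and, for (ii) and (iii), splicing in an $A$-path using the connectedness of $G[A]$. Write $P_0, P_1$ for the two $v_0$-$v_1$ subpaths of $C$, assume $R_i\subseteq P_i$, and let $r_i^0, r_i^1$ be the ends of $R_i$ closer to $v_0, v_1$ respectively. Since $u_0\in\rmint(R_0)$, the vertices $r_0^0, u_0, r_0^1$ are distinct and lie in this order along $P_0$; likewise $r_1^0, u_1, r_1^1$ on $P_1$ in (iii).

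For (i), apply Lemma~\ref{mainLemma} at $r_0^1$ with $j=1$. If $N(r_0^1)\cap A\neq\emptyset$, set $a=r_0^1$ and use the path $P_0[u_0,v_0]\cup P_1\cup P_0[v_1,r_0^1]$. Otherwise the lemma supplies $w\in N(r_0^1)\cap\rmint(R_1)$ and guarantees $r_1^1\in N(A)$; set $a=r_1^1$ and use $P_0[u_0,v_0]\cup P_1[v_0,w]\cup wr_0^1\cup P_0[r_0^1,v_1]\cup P_1[v_1,r_1^1]$. Disjointness of the two $P_0$-subpaths follows from $r_0^0<u_0<r_0^1$ along $P_0$, and of the two $P_1$-subpaths from $w$ lying strictly before $r_1^1$ along $P_1$.

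For (ii), apply Lemma~\ref{mainLemma} at both ends of $R_0$. In the base case that $N(r_0^0)\cap A\neq\emptyset$ and $N(r_0^1)\cap A\neq\emptyset$, the cycle is $P_1\cup P_0[v_0,r_0^0]\cup(A\text{-path})\cup P_0[r_0^1,v_1]$, where the $A$-path is supplied by the connectedness of $G[A]$. When one end fails, I splice in the shortcut through $\rmint(R_1)$ furnished by Lemma~\ref{mainLemma}, together with the $A$-neighbor of the corresponding end of $R_1$. When both ends fail, I use two such shortcuts $r_0^0w_0, r_0^1w_1$ with $w_0, w_1\in\rmint(R_1)$, and stitch the subpath $P_1[w_0,w_1]\subseteq\rmint(R_1)$ between the two $A$-paths. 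In every case the cycle passes through $v_0, v_1$ and avoids $\rmint(R_0)$.

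For (iii), I build the path as the concatenation of three pairwise internally disjoint segments: a $u_0$-$v_0$ path, a $v_0$-$v_1$ path $Q$ in $G[V(C)\cup A]$, and a $v_1$-$u_1$ path. The default choices are $P_0[u_0,v_0]$ and $P_1[v_1,u_1]$ for the outer segments, which forbid $V(P_0[u_0,v_0))\cup V(P_1(v_1,u_1])$ to $Q$ --- in particular $r_0^0, r_1^1, u_0, u_1$ are forbidden. Apply Lemma~\ref{mainLemma} at $r_1^0$ ($j=0$) and $r_0^1$ ($j=1$); in the base case that both have $A$-neighbors, $Q=P_1[v_0,r_1^0]\cup(A\text{-path})\cup P_0[r_0^1,v_1]$ suffices. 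When $r_0^1$ (say) has no $A$-neighbor, let $w\in N(r_0^1)\cap\rmint(R_1)$ be the witness; split on the position of $w$ relative to $u_1$ on $P_1$. If $w$ lies strictly on the $v_0$-side of $u_1$, take $Q=P_1[v_0,w]\cup wr_0^1\cup P_0[r_0^1,v_1]$ (no $A$ needed). If $w$ lies on the $v_1$-side (or equals $u_1$), instead reroute the third segment from $P_1[v_1,u_1]$ to $P_0[v_1,r_0^1]\cup r_0^1 w\cup P_1[w,u_1]$: this frees $r_1^1$ and the $v_1$-part of $P_1$, so that $Q=P_1[v_0,r_1^0]\cup(A\text{-path})\cup P_1[r_1^1,v_1]$ is valid. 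The case that $r_1^0$ has no $A$-neighbor is handled analogously by rerouting the first segment through a witness $w'\in N(r_1^0)\cap\rmint(R_0)$ to free $r_0^0$. The main obstacle throughout is the book-keeping: verifying, sub-case by sub-case, that the three segments are pairwise internally disjoint, which in each instance reduces to tracking the orderings of the vertices $r_i^j, u_i, w$ along $P_0$ and $P_1$.
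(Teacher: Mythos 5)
Your approach for all three parts matches the paper's: apply Lemma~\ref{mainLemma} at the relevant ends and splice in an $A$-path supplied by the connectedness of $G[A]$. Parts (i) and (ii) coincide with the paper essentially verbatim (your ``two $A$-paths'' phrasing in the last sub-case of (ii) is a bit loose --- there is only one $A$-path, from $r_1^0$ to $r_1^1$, plus the $P_1[w_0,w_1]$ bridge --- but the construction you describe is the right one).

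For (iii) your decomposition into an outer $u_0$--$v_0$ segment, a middle $v_0$--$v_1$ segment $Q$, and an outer $v_1$--$u_1$ segment is the same as the paper's, but your case split on the position of $w$ relative to $u_1$ is unnecessary and leads you into more bookkeeping than is needed. The point the paper exploits is that $P_1[w_1,u_1]$ is a subpath of $\rmint(R_1)$ \emph{regardless} of the order of $w_1$ and $u_1$ along $P_1$, so the rerouted third segment $P_0[v_1,r_0^1]\cup r_0^1w_1\cup P_1[w_1,u_1]$ is internally disjoint from $P_1[v_0,r_1^0]$ and $P_1[r_1^1,v_1]$ in \emph{both} of your sub-cases; your ``no $A$ needed'' sub-case is a valid alternative construction but not a necessary one, and absorbing it makes the ``both ends fail'' analysis transparent. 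As written, your sub-case where $w$ lies on the $v_1$-side of $u_1$ takes $Q=P_1[v_0,r_1^0]\cup(A\text{-path})\cup P_1[r_1^1,v_1]$, which tacitly requires $N(r_1^0)\cap A\neq\emptyset$, and you wave at the remaining situation (both $r_0^1$ and $r_1^0$ failing) with ``handled analogously by rerouting the first segment.'' That does work --- rerouting the first segment to $P_0[u_0,w_0]\cup w_0r_1^0\cup P_1[r_1^0,v_0]$ and the third to $P_0[v_1,r_0^1]\cup r_0^1w_1\cup P_1[w_1,u_1]$, then taking $Q=P_0[v_0,r_0^0]\cup A[r_0^0,r_1^1]\cup P_1[r_1^1,v_1]$ --- but that is precisely the paper's final construction, and you should verify the pairwise disjointness explicitly rather than assert it, since it is the only place where all three reroutings interact. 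With that verification written out, your argument is correct and essentially the paper's, just with one superfluous case split.
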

\begin{proof}
For convenience, let $P_0$, $P_1$ denote the two paths in $C$ between $v_0$ and $v_1$ containing $R_0,R_1$, respectively. For all $i,j \in \{0,1\}$, if $R_i$ is non-empty then  let $r_i^j$ be the end of $R_i$ closest to $v_j$ on $P_i$. Since we assume $u_0\in \rmint(R_0)$,  $r_0^0$ and $r_0^1$ are defined and distinct. For any distinct $x,y\in V(C)\cap N(A)$, if $G[A]$ is connected then we use $A[x,y]$ to denote a path in $G[A\cup \{x,y\}]$ from $x$ to $y$.

We prove (i) first. If $N(r_0^1)\cap A \neq \emptyset$ then $C-P_0(u_0,r_0^1)$ gives the desired path for (i). So assume 
$N(r_0^1)\cap A = \emptyset$. Then by Lemma~\ref{mainLemma}, $r_1^1\in N(A)$ and there exists $w_1 \in \rmint(R_1)\cap N(r_0^1)$. So 
$(C-P_0(u_0,r_0^1)-P_1(w_1,r_1^1))\cup w_1r_0^1$ gives the desired path for (i). 

To prove (ii), assume $G[A]$ is connected.  If $N(r_0^0)\cap A \neq \emptyset$ and $N(r_0^1)\cap A \neq \emptyset$ then 
 $(C-P_0(u_0,r_0^1))\cup A[r_0^0,r_0^1]$ gives the desired cycle for (ii). So by symmetry, we may assume 
$N(r_0^1)\cap A = \emptyset$. Then by Lemma~\ref{mainLemma}, $N(r_1^1)\cap A \ne  \emptyset$ and  
there exists $w_1 \in \rmint(R_1)\cap N(r_0^1)$. If $N(r_0^0)\cap A \ne \emptyset$ then $(C-\rmint(R_0)-P_1(w_1,r_1^1))\cup 
A[r_0^0,r_1^1]\cup w_1r_0^1$ gives the desired cycle for (ii). So assume $N(r_0^0)\cap A =\emptyset$. Then by  
 Lemma~\ref{mainLemma}, $N(r_1^0)\cap A \ne  \emptyset$ and  
there exists $x_1 \in \rmint(R_1)\cap N(r_0^0)$. Now $(C-\rmint(R_0)-\rmint(R_1))\cup 
A[r_1^0,r_1^1]\cup r_0^0x_1\cup P_1[x_1,w_1]\cup w_1r_0^1$ gives the desired cycle for (ii).

To prove (iii), assume $G[A]$ is connected, and let $u_1 \in \rmint(R_1)$. 
If $N(r_1^0)\cap A \neq \emptyset$ and $N(r_0^1)\cap A \neq \emptyset$ then 
$P_0[u_0, v_0]\cup P_1[v_0, r_1^0]\cup A[r_1^0,r_0^1] \cup P_0[r_0^1, v_1]\cup P_1[v_1, u_1]$ gives the desired path for (iii). 
So we may assume by symmetry that  $N(r_0^1)\cap A =\emptyset$. Then by Lemma~\ref{mainLemma}, $N(r_1^1)\cap V(A) \neq \emptyset$ and there exists a vertex $w_1 \in \rmint(R_1)\cap N(r_0^1)$. If $N(r_1^0)\cap A \ne \emptyset$ 
then $P_0[u_0, v_0]\cup P_1[v_0, r_1^0]\cup A[r_1^0,r_1^1] \cup P_1[r_1^1, v_1]\cup P_0[v_1, r_0^1]\cup r_0^1w_1\cup P_1[w_1,u_1]$ gives the desired path for (iii). So we may assume $N(r_1^0)\cap A = \emptyset$. Then by Lemma~\ref{mainLemma}, 
$N(r_0^0)\cap V(A) \neq \emptyset$ and there exists a vertex $w_0 \in \rmint(R_0)\cap N(r_1^0)$. Now
$P_0[u_0, w_0]\cup w_0r_1^0\cup P_1[r_1^0,v_0]\cup P_0[v_0,r_0^0]\cup A[r_0^0,r_1^1] \cup P_1[r_1^1, v_1]\cup P_0[v_1, r_0^1]\cup r_0^1w_1\cup P_1[w_1,u_1]$ gives the desired path for (iii).
\end{proof}

\section{3-Planar Graphs}
\label{planar}

In this section, we introduce the notion of 3-planar graphs, and state a characterization of 2-linked graphs. 

Let $G$ be a graph and let $s_1, t_1, s_2, t_2 \in V(G)$ be distinct vertices. Then an \textit{$(\{s_1, t_1\}, \{s_2, t_2\})$-linkage} is a set of two disjoint paths $P_1$ and $P_2$ such that for $i = 1,2$, $\rmend{(P_i)}=\{s_i, t_i\}$. To state a result on graphs without an $(\{s_1, t_1\}, \{s_2, t_2\})$-linkage, we use the following notion  due to Seymour \cite{seymour}, which can also  be found in \cite{Yu-Disjoint1}. 

\begin{definition}
A \textit{3-planar} graph $(G, \A)$ consists of a graph $G$ and a family $\A = \{A_1, \ldots, A_k\}$ of pairwise disjoint subsets of $V(G)$ (allowing $\A = \emptyset$) such that
\begin{enumerate}
\item for $1\leq i\neq j \leq k$, $N(A_i)\cap A_j=\emptyset$,
\item for $1 \leq i \leq k$, $|N(A_i)|\leq 3$, and
\item if $p(G, \A)$ denotes the graph obtained from $G$ by (for each $i$) deleting $A_i$ and adding new edges joining every pair of distinct non-adjacent vertices in $N(A_i)$, then $p(G, \A)$ can be drawn in a closed disc $D$ with no pair of edges crossing such that, for each $A_i$ with $|N(A_i)| = 3$, $N(A_i)$ induces a facial triangle in $p(G, \A)$.
\end{enumerate}
If, in addition, $b_1, \ldots, b_n$ are some vertices in $G$ such that $b_i \notin A_j$ for any $A_j \in \A$ and $b_1, \ldots ,b_n$ occur on the boundary of $D$ in that cyclic order, 
then we say that $(G, \A, b_1, \ldots, b_n)$ is 3-planar. We will say that such a drawing is a \textit{plane drawing of} $(G, \A, b_1, \ldots, b_n)$. We will say that $(G, b_1, \ldots, b_n)$ is 3-planar if there exists a collection $\A$ so that $(G, \A, b_1, \ldots, b_n)$ is 3-planar. If $(G, \emptyset, b_1, \ldots, b_n)$ is 3-planar we will say that $(G, b_1, \ldots, b_n)$ \textit{is planar}.
\end{definition}

The main tool we will use is the following theorem due to Seymour \cite{seymour}, while different versions are proved in \cite{Chakravarti, shiloach, Thomassen}.  

\begin{theorem}
\label{2link}
Let $G$ be a graph and let $s_1, t_1, s_2, t_2$ be distinct vertices of $G$. Then $G$ contains no $(\{s_1, t_1\}, \{s_2, t_2\})$-linkage if and only if $(G, s_1, s_2, t_1, t_2)$ is 3-planar.
\end{theorem}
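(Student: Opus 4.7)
The plan is to prove the biconditional by handling the two directions separately. The ``$3$-planar implies no linkage'' direction follows from a Jordan-curve argument applied to the plane drawing, while the ``no linkage implies $3$-planar'' direction is the substantive half and is handled by induction on $|V(G)|+|E(G)|$.

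For the easy direction, fix a plane drawing of $p(G,\A)$ in a disc $D$ with $s_1,s_2,t_1,t_2$ appearing in this cyclic order on the boundary, and suppose for contradiction that $\{P_1,P_2\}$ is an $(\{s_1,t_1\},\{s_2,t_2\})$-linkage in $G$. For each $A_i\in\A$, the paths $P_1,P_2$ are internally disjoint, so they use disjoint subsets of $A_i\cup N(A_i)$; since $|N(A_i)|\le 3$, at most one of $P_1,P_2$ meets two or more vertices of $N(A_i)$, and the portion of that path inside $A_i$ can be replaced by the single edge between its two attachment points, which exists in $p(G,\A)$ because $N(A_i)$ is made into a clique (a facial triangle when $|N(A_i)|=3$). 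After these replacements one obtains vertex-disjoint paths in the plane graph $p(G,\A)$ joining $s_1$ to $t_1$ and $s_2$ to $t_2$, with all four terminals on the boundary of $D$ in cyclic order $s_1,s_2,t_1,t_2$; the Jordan curve theorem forbids this.

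For the hard direction, assume $G$ has no $(\{s_1,t_1\},\{s_2,t_2\})$-linkage, and argue by induction on $|V(G)|+|E(G)|$ that $(G,s_1,s_2,t_1,t_2)$ is $3$-planar. First I would clear away the routine reductions: if $G$ is disconnected, restrict to the component containing all four terminals and absorb each other component into $\A$; delete any non-terminal vertex of degree at most $3$, apply induction, and reinsert that vertex as a new singleton member of $\A$; and if $G$ has a vertex cut $S$ with $|S|\le 3$ whose removal separates the terminals non-trivially, split $G$ along $S$ into two sides, use vertices of $S$ as auxiliary terminals to apply induction to each side, and glue the two $3$-planar structures along the disc boundary corresponding to $S$ — possibly absorbing one side wholesale as a new $A_i$ when all four terminals lie on the other side.

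After these reductions the residual case is that $G$ is $4$-connected with $s_1,s_2,t_1,t_2$ distinct, and this is the main obstacle. Here I need to prove that such a $G$ with no $2$-linkage is actually planar with $s_1,s_2,t_1,t_2$ lying on a single face in the given cyclic order, so that $\A=\emptyset$ witnesses $3$-planarity. My plan is to fix a cycle $C$ through all four terminals — which exists by Dirac's theorem for $4$-connected graphs — chosen, say, to maximize its length, and to analyze the $C$-bridges. Using $4$-connectivity together with the hypothesis that no $2$-linkage exists, any bridge that attaches ``across'' the separation of $C$ by a pair $\{s_i,t_i\}$ can be rerouted with a segment of $C$ to produce the forbidden $(\{s_1,t_1\},\{s_2,t_2\})$-linkage; this should force every bridge to attach within only one of the two arcs of $C$ determined by $\{s_i,t_i\}$, which is precisely the Tutte-style criterion that rules out any Kuratowski subdivision and places the four terminals on a common face in the correct cyclic order. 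Making this bridge-exchange argument rigorous, and verifying that the cyclic order on the face is indeed $s_1,s_2,t_1,t_2$ rather than an order that would already yield a linkage, is the technical heart of the proof.
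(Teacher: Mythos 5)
This statement is not proved in the paper at all: it is Seymour's Two Paths Theorem, quoted as a known result (with alternative proofs attributed to Chakravarti--Robertson, Shiloach, and Thomassen), so there is no in-paper argument to compare yours against. Judged on its own terms, your proposal has a genuine gap. Your easy direction (Jordan-curve argument after rerouting at most one path through each $A_i$ via the clique on $N(A_i)$) is essentially sound. But in the hard direction, everything you have written before the ``residual case'' is the routine preprocessing, and the residual case --- showing that a $4$-connected graph with no $(\{s_1,t_1\},\{s_2,t_2\})$-linkage is planar with $s_1,s_2,t_1,t_2$ on a common face in that cyclic order --- \emph{is} the theorem. Your proposed argument for it (take a cycle $C$ through the four terminals, then claim that $4$-connectivity plus the absence of a linkage lets every ``crossing'' bridge be exchanged with a segment of $C$ to create a linkage, forcing a Tutte-style condition that yields planarity) is an assertion, not a proof: bridges can overlap on $C$, can attach only at terminals, and can interact with one another, and ruling out $K_5$- and $K_{3,3}$-subdivisions, or producing an actual planar embedding with the terminals in the stated cyclic order, does not follow from the claim that each individual bridge sits inside one arc. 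All known proofs of this step (Seymour's, Thomassen's via contractible edges, Shiloach's) require a substantially more delicate analysis, typically an induction on small cuts together with a careful study of how $3$-cuts and bridges of a suitable subgraph can be arranged in the disc.

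There are also smaller but real problems in your reductions. Deleting a non-terminal vertex of degree at most $3$ and later reinstating it as a singleton member of $\A$ does not work as stated: the definition requires that when $|N(A_i)|=3$ the set $N(A_i)$ induce a \emph{facial triangle} in $p(G,\A)$, and an inductively obtained drawing of $G-v$ need not place the three neighbours of $v$ on a common face. The standard reduction instead replaces $v$ by a triangle on its neighbourhood (adding edges), and one must then verify both that this preserves the absence of a linkage and that the induction parameter still decreases. Similarly, the gluing along a cut $S$ with $|S|\le 3$ needs an explicit argument that the two $3$-planar structures can be merged so that conditions (i)--(iii) of the definition still hold (in particular that the copy of $S$ bounds a face on the side being absorbed). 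Since the paper uses Theorem~\ref{2link} purely as a black box, citing it is the intended route; if you do want a self-contained proof, the core $4$-connected case needs a genuinely new argument beyond the sketch you give.
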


\label{3Planar}
It is convenient for us to develop some lemmas on 3-planar graphs before we begin the proof of our main Theorem \ref{main}. The main lemmas in this section are Lemmas \ref{minimal} and \ref{3connPlanar}, which we will use in later sections to show that a certain 3-planar graph is in fact planar. First we have a definition from \cite{Yu-Disjoint1}.

\begin{definition}
Let $(G, \A)$ be 3-planar, let $A \in \A$ with $N(A) = \{a_1,\ldots,a_m\}$ (where
$m \leq 3$), and let $H = G[A\cup N(A)]$. We say that $A$ is \textit{minimal} if there is no collection $\mathcal{H}$
of pairwise disjoint subsets of $A$ such that $\mathcal{H} \neq \{A\}$ and $(H, \mathcal{H} , a_1,\ldots ,a_m)$ is 3-planar.
We say that $\A$ is minimal if every member of $\A$ is minimal.
\end{definition}

From \cite{Yu-Disjoint1}, we have the following.

\begin{lemma}
\label{minimal}
If $(G, b_0,...,b_n)$ is 3-planar, then there is a collection $\A$ of pairwise disjoint subsets of $V(G)\setminus \{b_0,\ldots,b_n\}$ such that $(G, \A, b_0,\ldots,b_n)$ is 3-planar and $\A$ is minimal.
\end{lemma}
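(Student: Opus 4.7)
My plan is to choose $\A$ to minimize the complexity $\Phi(\A) := \sum_{A \in \A}|A|^2$ among all collections for which $(G, \A, b_0, \ldots, b_n)$ is 3-planar, and argue any such minimizer is automatically minimal. A minimizer exists because $\Phi$ is nonnegative integer-valued and we are given at least one witness; after discarding empty members (which do not affect $p(G, \A)$ or $\Phi$), I may assume every $A \in \A$ has $|A| \ge 1$. Suppose some $A \in \A$ is not minimal: then for $H := G[A \cup N(A)]$ and $N(A) = \{a_1, \ldots, a_m\}$ there is a family $\mathcal{H}$ of pairwise disjoint subsets of $A$ with $\mathcal{H} \neq \{A\}$ and $(H, \mathcal{H}, a_1, \ldots, a_m)$ 3-planar. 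Pairwise disjointness together with $\mathcal{H} \neq \{A\}$ forces every nonempty $B \in \mathcal{H}$ to be a proper subset of $A$. Set $\A' := (\A \setminus \{A\}) \cup \mathcal{H}$; pairwise disjointness of $\A'$ and avoidance of $\{b_0, \ldots, b_n\}$ are immediate, and the combinatorial conditions $|N(B)| \le 3$ and $N(B) \cap B' = \emptyset$ for distinct $B, B' \in \A'$ follow from the analogous conditions for $\A$ and for $\mathcal{H}$, using $N_H(B) = N_G(B)$ when $B \subseteq A$.

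The substantive step is verifying that $(G, \A', b_0, \ldots, b_n)$ admits a valid plane drawing. I would construct one by surgery. Fix a plane drawing of $p(G, \A)$ in a closed disc $D$, with $b_0, \ldots, b_n$ on $\partial D$ in the prescribed cyclic order; the region where $A$ was ``hidden'' corresponds to a face $F$ of $p(G, \A)$ incident to every vertex of $N(A)$, and when $m = 3$ this $F$ is bounded by the facial triangle on $N(A)$ guaranteed by the definition. Take a plane drawing of $p(H, \mathcal{H}, a_1, \ldots, a_m)$ in a disc $D'$, reflecting if necessary so that the cyclic order of $a_1, \ldots, a_m$ on $\partial D'$ agrees with their cyclic order on $\partial F$, and paste $D'$ into $F$ via a homeomorphism identifying each $a_i$ on $\partial D'$ with the same $a_i$ on $\partial F$. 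The resulting plane drawing realizes $p(G, \A')$ together with possibly the clique edges on $N(A)$ that $p(G, \A)$ carried but $p(G, \A')$ does not; these stray edges lie along $\partial F$ and can be erased without creating crossings, leaving a plane drawing of $p(G, \A')$ in $D$. The facial-triangle requirement for any $B \in \A'$ with $|N(B)| = 3$ is inherited intact: from the outer drawing when $B \in \A \setminus \{A\}$ (its triangle lies outside $F$) and from the inner drawing when $B \in \mathcal{H}$ (its triangle lies inside $F$), since the two drawings meet only along $\partial F$.

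Finally I compare $\Phi$. Every $B \in \mathcal{H}$ satisfies $|B| \le |A|-1$, and $\sum_{B \in \mathcal{H}}|B| \le |A|$ because the $B$'s are pairwise disjoint subsets of $A$, so $\sum_{B \in \mathcal{H}} |B|^2 \le (|A|-1)|A| < |A|^2$. Thus $\Phi(\A') = \Phi(\A) - |A|^2 + \sum_{B \in \mathcal{H}}|B|^2 < \Phi(\A)$, contradicting the choice of $\A$. The main obstacle I expect is the drawing surgery in the previous paragraph, specifically aligning the cyclic orders of $N(A)$ on $\partial F$ and $\partial D'$, erasing the stray clique edges on $N(A)$ without destroying any face the definition demands, and verifying that the facial-triangle conditions survive the paste. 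This is topological bookkeeping rather than a deep difficulty, but needs to be handled with care.
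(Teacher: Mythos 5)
There is a genuine gap, and it sits exactly where you flagged the ``topological bookkeeping'': the claim that after pasting the drawing of $p(G[A\cup N(A)],\mathcal{H})$ into the face $F$ bounded by the triangle on $N(A)$, the facial-triangle requirement for every other member $B\in\A\setminus\{A\}$ is ``inherited intact'' because ``its triangle lies outside $F$.'' Another member $B$ may have $N(B)=N(A)$, and the only face bounded by that triangle may be $F$ itself; pasting anything nontrivial into $F$ then destroys the face that $B$ needs, and in general $(G,(\A\setminus\{A\})\cup\mathcal{H},b_0,\ldots,b_n)$ is simply not 3-planar, so no redrawing rescues the exchange. Consequently the whole strategy (exchange plus $\Phi$-minimization) cannot work: the $\Phi$-minimizing family need not be minimal. (For comparison, the paper does not prove this lemma at all; it quotes it from \cite{Yu-Disjoint1}.)

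Concretely, let $G$ consist of a triangle $xyz$, three further vertices $w,a,\beta$ each adjacent to all of $x,y,z$, and pendant vertices $b_0,b_1,b_2,b_3$ attached to $w$. The families $\emptyset$, $\{\{a\}\}$, $\{\{\beta\}\}$ are not 3-planar for $(G,b_0,\ldots,b_3)$: the first because $G\supseteq K_{3,3}$, the other two because the subgraph on $\{x,y,z\}$ together with the two remaining vertices of $\{w,a,\beta\}$ is 3-connected with the two extra vertices forced onto opposite sides of $xyz$, so $xyz$ is facial in no embedding. On the other hand $\A=\{\{a\},\{\beta\}\}$ is 3-planar (draw $K_4$ on $x,y,z,w$ with the face $xyz$ empty and hang the pendants off $w$), and since the only admissible singleton members are $\{a\}$ and $\{\beta\}$, this is your unique $\Phi$-minimizer. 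But $\{a\}$ is not minimal: $\mathcal{H}=\emptyset$ is a witness, since $G[\{a,x,y,z\}]=K_4$ can be drawn in a disc with $x,y,z$ on the boundary. Your argument would now assert that $\A'=\{\{\beta\}\}$ is 3-planar, which is false, so the intended contradiction never materializes. Note that in this example the minimal family promised by the lemma is $\{\{x,y,z\}\}$, which hides the triangle rather than refining either blob; this shows the correct proof cannot proceed by locally refining one member of an extremal family, and some more global re-selection of $\A$, as in \cite{Yu-Disjoint1}, is genuinely needed.
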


Now we give two propositions to help prove Lemma \ref{3connPlanar} when extending a linkage in $p(G,\A)$ to a linkage in $G$.
\begin{proposition}
\label{3planarityLinkage}
Let $(G, \A)$ be 3-planar so that $\A$ is minimal. Let $s_1, t_1, s_2 \in V(p(G,\A))$ be distinct, and let $t_2 \in V(G)$. Let $t_2^* = t_2$ when $t_2 \in V(p(G, \A))$, and let $t_2^*$ be an arbitrary vertex in $N(A)$ when $t_2 \in A$ for some $A \in \A$. Suppose that $t_2^* \notin \{s_1, t_1, s_2\}$ and $p(G, \A)$ contains an $(\{s_1, t_1\}, \{s_2, t_2^*\})$-linkage. Then $G$ contains an $(\{s_1, t_1\}, \{s_2, t_2\})$-linkage.
\end{proposition}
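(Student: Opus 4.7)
The plan is to take the given $(\{s_1,t_1\}, \{s_2,t_2^*\})$-linkage $\{Q_1, Q_2\}$ in $p(G,\A)$ and lift it to an $(\{s_1,t_1\}, \{s_2,t_2\})$-linkage in $G$ by replacing, for each $A \in \A$, each ``virtual'' edge of $Q_1 \cup Q_2$ (i.e.\ an edge of $E(p(G,\A)) \setminus E(G)$, which necessarily has both endpoints in $N(A)$) by a path through $A$; in the case $t_2 \in A$ for some $A \in \A$, I additionally extend the lift of $Q_2$ into $A$ so that it terminates at $t_2$ rather than at $t_2^*$.

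For each $A \in \A$ let $F_A$ be the set of virtual edges of $Q_1 \cup Q_2$ with endpoints in $N(A)$. Using $|N(A)|\leq 3$ and the vertex-disjointness of $Q_1, Q_2$, a short count yields $|F_A|\le 2$, and when $|F_A|=2$ the two edges lie on the same $Q_i$ and share an endpoint. So the local lifting problem at $A$ splits into three cases: $|F_A|=0$ (nothing to do); $|F_A|=1$, say $F_A=\{uv\}$, where I need a path in $G[A\cup N(A)]$ from $u$ to $v$ with interior in $A$; and $|F_A|=2$, say $F_A=\{uv,vw\}$, where I need two such paths through $A$ (from $u$ to $v$ and from $v$ to $w$) sharing only the vertex $v$, equivalently a single $u$-to-$w$ path through $A$ using $v$ as an internal vertex.

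In each case I would argue that the required replacement exists by contraposition against the minimality of $\A$: if the needed path(s) did not exist, then I would partition $A$ into a collection $\mathcal{H}$ of pairwise disjoint subsets, each with at most $3$ neighbors in $N(A)$, by grouping components of a suitable auxiliary subgraph of $G[A]$ according to which vertices of $N(A)$ they attach to. The resulting $(G[A\cup N(A)], \mathcal{H}, N(A))$ would then still be 3-planar --- the drawing comes from that of $p(G,\A)$ restricted to the disc region occupied by $A$, using the facial triangle condition on $N(A)$ --- while $\mathcal{H}\neq\{A\}$, contradicting minimality. The same style of argument handles the $t_2\in A$ case: the extension from $t_2^*$ to $t_2$ inside $A$ is constructed simultaneously with the above lifts, and minimality again forces it to exist and to be choosable disjoint from the other lifts through $A$.

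The main obstacle will be the case $|N(A)|=3$ with $|F_A|=2$, further compounded if $t_2\in A$, since several internally disjoint paths through $A$ realizing a prescribed pattern on $N(A)\cup\{t_2\}$ must then be routed simultaneously, and the contrapositive refinement $\mathcal{H}$ must be built delicately so that $p(G[A\cup N(A)], \mathcal{H})$ still embeds in a disc with $N(A)$ on the boundary. Once each local lift is secured, concatenating them with the unchanged (real) edges of $Q_1$ and $Q_2$, together with the $t_2$-extension when needed, yields the desired $(\{s_1,t_1\}, \{s_2,t_2\})$-linkage in $G$.
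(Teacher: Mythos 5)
Your overall plan is the right one, and in fact it is the same lifting that the paper sketches in its parenthetical remark: replace each virtual edge of the linkage by a path through the corresponding $A\in\A$, and extend the second path from $t_2^*$ to $t_2$ when $t_2$ lies in some $A$. Your bookkeeping is also correct (each virtual edge lies inside $N(A)$ for some $A$, at most two such edges per $A$, and if two then they share an endpoint and lie on the same path; interiors lifted through distinct members of $\A$ are automatically disjoint). The difference is that the paper does not prove the local lifting at all: it invokes Propositions 3.1 and 3.2 of X.~Yu's ``Disjoint paths in graphs I,'' which are exactly the statements that a minimal piece $A$ admits the required internally disjoint path systems on $N(A)$ (and the extension from $t_2^*$ to $t_2$), whereas you propose to derive them from minimality directly.

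That is where your proposal has a genuine gap: the local existence claims are asserted, not proved. You explicitly defer the hardest configurations --- a single minimal $A$ through which two virtual edges must be lifted internally disjointly, and worse, an $A$ containing $t_2$ through which a virtual edge must also be lifted disjointly from the $t_2^*$--$t_2$ extension --- and these are precisely the cases that carry the content of Yu's Propositions 3.1 and 3.2. Moreover, the one concrete mechanism you offer for the contrapositive is flawed as stated: you cannot certify that $(G[A\cup N(A)],\mathcal{H},N(A))$ is 3-planar by ``restricting the drawing of $p(G,\A)$ to the disc region occupied by $A$,'' because $A$ is deleted in $p(G,\A)$ and occupies no region there; the triangle on $N(A)$ bounds an empty face. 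One must instead construct a disc drawing of $p(G[A\cup N(A)],\mathcal{H})$ from scratch after grouping the components of $G[A]$ by their attachments, and checking the facial-triangle condition for the new pieces; doing this in the simultaneous-routing cases is nontrivial and is not carried out. So the proposal is a correct outline of the standard argument, but to be a complete proof it would need either to carry out these local arguments in full or, as the paper does, to cite the propositions of Yu that encapsulate them.
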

\begin{proof}
Apply Proposition 3.2 of \cite{Yu-Disjoint1}, with $b \coloneqq s_2$, $b'\coloneqq s_1$, $v \coloneqq t_1$, and $u\coloneqq t_2$. Note that condition (ii) for Proposition 3.2 is guaranteed 
by Proposition 3.1 of \cite{Yu-Disjoint1}. (The proof basically starts with disjoint paths $S_1,S_2$ in $p(G,\A)$ from $s_1,s_2$ to $t_1,t_2^*$, respectively, 
replaces each edge $uv\in E(S_1\cup S_2)-E(G)$ with a path in $G[A\cup N(A)]$ (for some $A\in \A$ with $u,v\in N(A)$), and extends the path from $t_2^*$ to $t_2$.) 
\end{proof}

\begin{proposition}
\label{planarityLinkage}
Fix a plane drawing of a 3-connected planar graph $G$ with outer cycle $Z$. Let $s_1,t_1,s_2 \in V(Z)$ be distinct vertices and let $t_2^* \in V(G)\setminus V(Z)$. Then $G$ has an $(\{s_1,t_1\},\{s_2,t_2^*\})$-linkage.
\end{proposition}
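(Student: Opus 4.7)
My plan is to argue by contradiction. Suppose $G$ has no $(\{s_1, t_1\}, \{s_2, t_2^*\})$-linkage. By Theorem~\ref{2link} (Seymour's 2-linkage characterization), $(G, s_1, s_2, t_1, t_2^*)$ is 3-planar. I would then apply Lemma~\ref{minimal} to fix a minimal $\A$ such that $(G, \A, s_1, s_2, t_1, t_2^*)$ is 3-planar. The 3-connectivity of $G$, together with the fact that each $A \in \A$ is disjoint from the four terminals, forces $|N(A)| = 3$ for every $A$: otherwise $N(A)$ would be a vertex cut of size at most two separating the nonempty set $A$ from $\{s_1, s_2, t_1, t_2^*\}$, violating 3-connectivity.

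The main step is to use the 3-planar structure, together with the fact that $G$ itself is 3-connected and planar, to exhibit a facial cycle $F^*$ of $G$ (in its given plane embedding) containing $s_1, s_2, t_1, t_2^*$ in the prescribed cyclic order. When $\A = \emptyset$, the 3-planarity says directly that $G$ admits a planar embedding with the four terminals on the outer face in cyclic order; by Whitney's theorem for 3-connected planar graphs---which asserts that the planar embedding of $G$ is unique up to reflection---this outer face must coincide with a facial cycle in the given embedding. For general $\A$, one argues similarly by lifting the drawing of $p(G, \A)$ back to a drawing of $G$, using the rigid planar embedding of $G$ provided by Whitney and the fact that each 3-cut $N(A)$ in the 3-connected planar graph $G$ has a well-defined planar ``inside'' containing $A$, so the outer boundary of the drawing of $p(G, \A)$ descends to an actual facial cycle of $G$.

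Once $F^*$ is obtained, the contradiction is immediate: $t_2^* \in V(F^*) \setminus V(Z)$ forces $F^* \neq Z$, while $\{s_1, t_1, s_2\} \subseteq V(F^*) \cap V(Z)$ means $F^*$ and $Z$ are distinct facial cycles of the 3-connected plane graph $G$ sharing at least three vertices. This contradicts the standard fact that in a 3-connected plane graph any two distinct facial cycles share at most two vertices---otherwise three shared vertices would give rise to a 2-vertex cut, violating 3-connectivity. Hence the linkage exists.

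The hardest part will be the transfer step when $\A$ is nonempty. By the definition of minimality, each $G[A \cup N(A)]$ fails to embed in a disc with $N(A)$ on its boundary, so one cannot simply fill each triangular face of the drawing of $p(G, \A)$ with a standalone embedding of $G[A \cup N(A)]$. The argument must instead appeal to the rigid planar embedding of $G$ (via Whitney's uniqueness) and to the planar structure of 3-cuts in 3-connected planar graphs to extract the facial cycle $F^*$ directly, while carefully tracking how the cyclic order on the four terminals is preserved by the contractions that produce $p(G, \A)$ from $G$.
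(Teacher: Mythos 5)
Your proof takes a genuinely different, and substantially heavier, route than the paper's. The paper argues directly and elementarily: let $P$ be the $s_1$--$t_1$ arc of $Z$ avoiding $s_2$; if $G - V(P)$ has a $t_2^*$--$s_2$ path you are done, and otherwise the component $C$ of $G - V(P)$ containing $t_2^*$ has $N(C)\subseteq V(P)$ with $|N(C)|\ge 3$, and choosing $v_0,v_1\in N(C)$ with $P[v_0,v_1]$ maximal, planarity forces $\{v_0,v_1\}$ to be a $2$-cut, a contradiction. Your proof instead runs through Seymour's theorem, minimal $3$-planar decompositions, Whitney's uniqueness theorem for $3$-connected planar embeddings, and the face-intersection property of $3$-connected plane graphs --- a much larger toolbox, which is also stylistically awkward given that this proposition is being proved precisely to supply the planar base case for Lemma~\ref{3connPlanar}.

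The step you flag as hard is, as written, a genuine gap. When $\A\neq\emptyset$, the outer cycle of the disc drawing of $p(G,\A)$ may traverse edges that were added between non-adjacent vertices of some $N(A)$; such a cycle is not even a subgraph of $G$, so there is no direct ``descent'' to a facial cycle of $G$, and invoking Whitney's rigidity does not by itself repair this. The way to close the gap is to observe that this case cannot occur: since $G$ is $3$-connected and planar and each $A\in\A$ avoids all four terminals, $N(A)$ is a $3$-cut of $G$ and $A$ is one side of it; in a $3$-connected plane graph each side of a $3$-cut, together with the cut, embeds in a closed disc with the three cut-vertices on the boundary, so $\mathcal{H}=\emptyset$ witnesses that $A$ is not minimal. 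Hence a minimal $\A$ must be empty, and only your $\A=\emptyset$ argument is ever needed; that argument (Whitney plus the fact that two distinct faces of a $3$-connected plane graph share at most two vertices) is sound. With that observation supplied your proof would be complete, but it remains far less economical than the paper's.
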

\begin{proof}
Let $P$ denote the path in $Z-s_2$ between $s_1$ and $t_1$. If $G-V(P)$ contains a path $Q$ from $t_2^*$ to $s_2$ then $P,Q$ form the desired $(\{s_1,t_1\},\{s_2,t_2^*\})$-linkage. Hence, we may assume that such $Q$ does not exist. Let $C$ denote the component of $G-V(P)$ containing $t_2^*$. Then $s_2\notin V(C)$. Since $G$ is 3-connected, $|N(C)\cap V(P)|\ge 3$. So let $v_0,v_1\in N(C)\cap V(P)$ with $P[v_0,v_1]$ maximal. Then by planarity of $G$, $\{v_0,v_1\}$ is a cut, a contradiction. 
\end{proof}

Now we are ready to prove the final lemma for this section.

\begin{lemma}
\label{3connPlanar}
Let $(G, \A)$ be 3-planar so that $G$ is 3-connected and $\A$ is minimal. Fix some plane drawing of $p(G, \A)$ and let $F$ be the set of vertices on the outer face. 
Suppose that $|F| \geq 4$. 
Let $s_1, t_1, s_2$ be three distinct vertices in $F$, and let $t_2 \in V(G)\setminus F$. Then $G$ has an $(\{s_1,t_1\},\{s_2, t_2\})$-linkage.
\end{lemma}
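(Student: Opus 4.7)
The strategy is to apply Proposition~\ref{3planarityLinkage} to reduce the task of finding an $(\{s_1,t_1\},\{s_2,t_2\})$-linkage in $G$ to finding an $(\{s_1,t_1\},\{s_2,t_2^*\})$-linkage in $p(G,\A)$ for a suitably chosen $t_2^* \in V(p(G,\A))$, and then to invoke Proposition~\ref{planarityLinkage}. To use the latter, I first want $p(G,\A)$ to be $3$-connected. Because $\A$ is minimal, each $A\in\A$ is connected, and $|N(A)|=3$ for every $A\in\A$: if $|N(A)|\le 2$, then either $N(A)$ is a small cut of $G$ (contradicting $3$-connectivity) or $V(G)=A\cup N(A)$, which forces $|V(p(G,\A))|\le 2$ and hence $|F|<4$. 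Since $N(A)$ induces a triangle in $p(G,\A)$, deleting at most two vertices of $p(G,\A)$ leaves $N(A)$ minus those vertices within one component; each $A$ therefore stays attached to a single component in $G$, so any $\le 2$-cut of $p(G,\A)$ descends to a $\le 2$-cut of $G$, a contradiction.

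With $p(G,\A)$ known to be $3$-connected planar, the case $t_2\in V(p(G,\A))$ is immediate: set $t_2^*:=t_2\notin V(F)$, apply Proposition~\ref{planarityLinkage} to obtain the desired linkage in $p(G,\A)$, and lift it to $G$ via Proposition~\ref{3planarityLinkage}. Otherwise $t_2\in A$ for some $A\in\A$ with $N(A)=\{a_1,a_2,a_3\}$. If some $a_i\notin V(F)\cup\{s_1,t_1,s_2\}$, take $t_2^*:=a_i$ and reduce to the previous case. The sensitive situation is $N(A)\subseteq V(F)$. Here I would split on the cyclic order of $a_1,a_2,a_3$ relative to $s_1,t_1,s_2$ on $F$: these three distinguished vertices partition $F$ into three arcs, and if some $a_j$ lies on an arc incident to $s_2$, then $\{s_1,t_1\}$ and $\{s_2,a_j\}$ are non-interleaved on $F$, so Proposition~\ref{planarityLinkage} applied to $p(G,\A)$ with $t_2^*=a_j$ yields the linkage.

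The main obstacle is the sub-case in which all three $a_i$'s lie on the arc of $F$ from $s_1$ to $t_1$ not containing $s_2$. In this configuration any choice of $t_2^*\in N(A)$ gives an interleaved quadruple on $F$, so no direct $(\{s_1,t_1\},\{s_2,t_2^*\})$-linkage exists in $p(G,\A)$. I would overcome this with an auxiliary-vertex trick: introduce a new vertex $x$ inside the facial triangle on $N(A)$ and join $x$ to each $a_i$, obtaining a planar $3$-connected graph $G^+$ with outer cycle $F$ and $x\notin V(F)$. Applying Proposition~\ref{planarityLinkage} to $G^+$ with $t_2^*=x$ yields an $(\{s_1,t_1\},\{s_2,x\})$-linkage $\{P_1,P_2\}$; since $P_2$ reaches $x$ via some edge $a_jx$, the path $P_2-x$ is an $s_2$-to-$a_j$ path in $p(G,\A)$. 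Because no direct $(\{s_1,t_1\},\{s_2,a_j\})$-linkage exists in $p(G,\A)$, the path $P_1$ must pass through $x$, entering via some $a_i$ and exiting via some $a_k$ with $\{i,j,k\}=\{1,2,3\}$. To assemble the linkage in $G$, one extends $P_2-x$ by a $t_2$-to-$a_j$ path $Q_j$ in $H:=G[A\cup N(A)]$, and replaces the traversal of $x$ in $P_1$ by an $a_i$-to-$a_k$ path in $H$ internally disjoint from $Q_j$. Constructing these disjoint internal routes is the delicate final step: I would use Menger's theorem applied to the $3$-cut $N(A)$ of $G$ to obtain three internally disjoint $t_2$-$\{a_1,a_2,a_3\}$ paths in $H$ and combine the two reaching $a_i$ and $a_k$ into the required $a_i$-$a_k$ path, choosing $Q_j$ to be the third fan path and re-routing near $t_2$ (using the internal structure of $H$ forced by the minimality of $\A$) so that $t_2$ is not used outside of $Q_j$.
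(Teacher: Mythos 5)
You spend most of your effort on a case that cannot arise. In a $3$-connected plane graph the outer cycle $Z$ is a facial cycle and hence chordless, and since $N(A)$ induces a triangle in $p(G,\A)$ while $|V(Z)| \ge 4$, the triangle $N(A)$ cannot have all three vertices on $Z$. So there is always some $a_i \in N(A) \setminus V(Z)$, and the choice $t_2^* := a_i$ immediately reduces to Proposition~\ref{planarityLinkage} followed by Proposition~\ref{3planarityLinkage}, exactly as in your easy sub-case. Your ``sensitive situation'' $N(A) \subseteq V(F)$ and the auxiliary-vertex construction are therefore unnecessary.

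Moreover, the detour itself is not sound as written. In the sub-case where some $a_j$ lies on an arc of $F$ incident to $s_2$, you propose to apply Proposition~\ref{planarityLinkage} with $t_2^* = a_j$, but that proposition requires $t_2^* \in V(G)\setminus V(Z)$, and here $a_j \in V(F) = V(Z)$, so it does not apply. In the final sub-case your ``re-routing near $t_2$ using the internal structure of $H$ forced by the minimality of $\A$'' is left vague: you need $a_i$--$a_k$ and $t_2$--$a_j$ paths that are vertex-disjoint inside $G[A \cup N(A)]$, and Menger at the $3$-cut $N(A)$ only gives internally disjoint paths sharing $t_2$, so the claimed combination doesn't directly yield the required disjointness. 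Rather than repair this, note the chordless-outer-cycle observation and delete the entire case split; the remainder of your argument (the $3$-connectivity of $p(G,\A)$ and the two propositions) is then exactly the paper's proof.
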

\begin{proof}
Since $G$ is 3-connected and $p(G,\A)$ has at least four vertices, $p(G,\A)$ is $3$-connected. Let $Z$ be the outer cycle of the plane drawing of $p(G, \A)$. Then $Z$ is chordless in $p(G,\A)$, and $|V(Z)|=|F|\ge 4$.
If $t_2 \in V(p(G, \A))$, define $t_2^* = t_2$. Otherwise, there is an $A \in \A$ so that $t_2 \in A$. Then define $t_2^*$ to be a vertex in $N(A)\setminus V(Z)$, which  
exists since $|N(A)|=3$, $Z$ is chordless in $p(G,\A)$, and $|V(Z)|\ge 4$.

In either case, by Proposition \ref{planarityLinkage}, $p(G,\A)$ has an $(\{s_1,t_1\},\{s_2,t_2^*\})$-linkage. Then by Proposition \ref{3planarityLinkage}, $G$ has an $(\{s_1,t_1\},\{s_2, t_2\})$-linkage.
\end{proof}

\section{Skeletons}
\label{skeletons}

The objective of this section is to find an intermediate structure in a graph $G$, which we call a ``skeleton'',  that will be helpful for finding an ordered cycle in $G$. 

\begin{definition}
Let $G$ be a graph and let $\{c_0, c_1, c_2, c_3\} \subseteq V(G)$. Then a $(c_0, c_1, c_2, c_3)$-\textit{skeleton} is an ordered list 
$S=(C_0, C_2, Z, P_0, P_1, P_2, P_3)$ such that

\begin{enumerate}
\item  $C_0$, $C_2$, and $Z$ are vertex disjoint cycles, $c_0,c_1\in V(C_0)$,  $c_2,c_3\in V(C_2)$, 

\item  for each $i\in \{0,1,2,3\}$, $P_i$ is a path in $G$ from $c_i$ to some vertex  $z_i\in V(Z)$ that is internally disjoint from 
$V(C_0)\cup V(C_2)\cup V(Z)$, 

\item  $P_0,P_1,P_2,P_3$ are pairwise vertex disjoint, and $z_{0}, z_{1}, z_{3}, z_{2}$ occur on $Z$ in this cyclic order.
\end{enumerate}
\end{definition}

An illustration of a skeleton  is given in Figure \ref{fig:skeleton}. 
We will view $S = C_0\cup C_2\cup Z \cup (\bigcup_{i=0}^3 P_i)$. There are several main lemmas we need on skeletons. 

	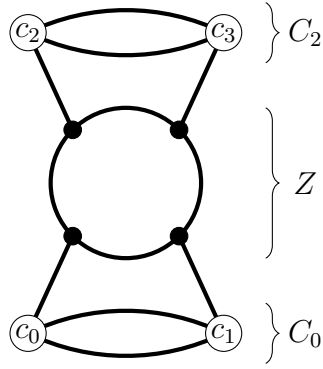
\begin{figure}
		\centering
        \trimbox{0cm 2cm 0cm 3cm}{
		\begin{tikzpicture}[every node/.style={inner sep=.6,outer sep=0}]
			\draw[ultra thick] (0,0) circle (1cm);
            \tkzDefPoint(-1.3,2){C}
        	\tkzDefPoint(0,2.3){B}
            \tkzDefPoint(1.3,2){A}
			\tkzCircumCenter(A,B,C)\tkzGetPoint{O}
			\tkzDrawArc[ultra thick, color=black](O,A)(C)
            \tkzDefPoint(-1.3,2){A}
        	\tkzDefPoint(0,1.7){B}
            \tkzDefPoint(1.3,2){C}
			\tkzCircumCenter(A,B,C)\tkzGetPoint{O}
			\tkzDrawArc[ultra thick, color=black](O,A)(C)
            
            \tkzDefPoint(-1.3,-2){A}
        	\tkzDefPoint(0,-2.3){B}
            \tkzDefPoint(1.3,-2){C}
			\tkzCircumCenter(A,B,C)\tkzGetPoint{O}
			\tkzDrawArc[ultra thick, color=black](O,A)(C)
            \tkzDefPoint(-1.3,-2){C}
        	\tkzDefPoint(0,-1.7){B}
            \tkzDefPoint(1.3,-2){A}
			\tkzCircumCenter(A,B,C)\tkzGetPoint{O}
			\tkzDrawArc[ultra thick, color=black](O,A)(C)
            \node[draw, circle, fill=white] (c1) at (-1.3,-2) {$c_{0}$};
            \node[draw, circle, fill=white] (c4) at (1.3,-2) {$c_{1}$};
            \node[draw, circle, fill=white] (c3) at (-1.3,2) {$c_{2}$};
            \node[draw, circle, fill=white] (c2) at (1.3,2) {$c_{3}$};
            \draw[ultra thick] (c1) -- (225:1cm);
            \draw[ultra thick] (c4) -- (-45:1cm);
            \draw[ultra thick] (c3) -- (135:1cm);
            \draw[ultra thick] (c2) -- (45:1cm);
            \node[draw, circle, inner sep=.2em, fill=black] at (135:1) {};
            \node[draw, circle, inner sep=.2em, fill=black] at (225:1) {};
            \node[draw, circle, inner sep=.2em, fill=black] at (-45:1) {};
            \node[draw, circle, inner sep=.2em, fill=black] at (45:1) {};
            \draw [decorate,decoration={brace,amplitude=5pt},xshift=-4pt,yshift=0pt] (2,2.4) -- (2,1.6) node [black,midway,xshift=15pt] {$C_2$};
            \draw [decorate,decoration={brace,amplitude=5pt},xshift=-4pt,yshift=0pt] (2,-1.6) -- (2,-2.4) node [black,midway,xshift=15pt] {$C_0$};
            \draw [decorate,decoration={brace,amplitude=5pt},xshift=-4pt,yshift=0pt] (2,1) -- (2,-1) node [black,midway,xshift=15pt] {$Z$};
		\end{tikzpicture}
        }
        \caption{A depiction of a $(c_0, c_1, c_2, c_3)$-skeleton.}
        \label{fig:skeleton}
		\end{figure}

\begin{lemma}
\label{anySkeleton} Let $G$ be a graph with distinct vertices $c_0, c_1,c_2, c_3 \in V(G)$ and let $S=(C_0, C_2, Z, P_0, P_1, P_2, P_3)$ be a $(c_0, c_1, c_2, c_3)$-skeleton in $G$. Suppose $G$ has no cycle through $c_0, c_1, c_2, c_3$ in order. Then, for any path $P$ in $G$ from $V(C_0)$ to $V(C_2)$ that is internally disjoint from $S$, either $\rmend(P) = \{c_{0}, c_2\}$ or $\rmend(P) = \{c_1, c_3\}$.
\end{lemma}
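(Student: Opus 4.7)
The plan is to suppose, for contradiction, that the endpoints of $P$ are $\{u,v\}$ with $u \in V(C_0)$ and $v \in V(C_2)$ satisfying $\{u,v\} \notin \{\{c_0,c_2\},\{c_1,c_3\}\}$, and in every remaining configuration to exhibit a cycle in $S \cup P$ through $c_0,c_1,c_2,c_3$ in this order, contradicting the hypothesis that $G$ has no such cycle.

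Every cycle I build is the concatenation of four internally disjoint paths $Q_{01}, Q_{12}, Q_{23}, Q_{30}$, with $Q_{ij}$ running from $c_i$ to $c_j$. In all cases I take $Q_{01}$ to be an arc of $C_0$ between $c_0$ and $c_1$ and $Q_{23}$ an arc of $C_2$ between $c_2$ and $c_3$. Exactly one of the cross paths $Q_{12}, Q_{30}$ will absorb $P$ (possibly extended by a sub-arc of $C_0$ if $u \notin \{c_0,c_1\}$ and/or by a sub-arc of $C_2$ if $v \notin \{c_2,c_3\}$), while the other is built from the skeleton in the form $P_i \cup (\text{arc of } Z) \cup P_j$, where $\{i,j\}$ are the two indices whose $c$-vertices are not reached via $P$.

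The case analysis splits on whether each of $u,v$ is a distinguished vertex (in $\{c_0,c_1\}$ for $u$, or in $\{c_2,c_3\}$ for $v$) or interior to its cycle. When $\{u,v\}$ equals $\{c_0,c_3\}$ or $\{c_1,c_2\}$, take $P$ itself as one of the cross paths and build the other from $P_1 \cup Z \cup P_2$ or $P_3 \cup Z \cup P_0$ accordingly. When exactly one endpoint is interior, say $u=c_0$ and $v \in V(C_2) \setminus \{c_2,c_3\}$, form $Q_{30}$ by concatenating the sub-arc of $C_2$ from $c_3$ to $v$ with $P$ (reversed), take $Q_{23}$ to be the arc of $C_2$ from $c_2$ to $c_3$ avoiding $v$, and use $Q_{12} = P_1 \cup Z \cup P_2$; the three symmetric sub-cases are analogous. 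When both endpoints are interior, extend $P$ at each end by a sub-arc of $C_0$ and of $C_2$ to form $Q_{12}$, and take $Q_{01}, Q_{23}$ to be the complementary arcs of $C_0, C_2$ (so that the arcs used meet only at the common $c_i$'s), with $Q_{30} = P_3 \cup Z \cup P_0$.

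The main obstacle, which I expect to do the real work, is verifying internal disjointness of the four paths in every case. This rests on four observations: arcs of $C_0$ (resp., $C_2$) chosen on opposite sides of the interior end of $P$ meet only at $c_0, c_1$ (resp., $c_2, c_3$); $P$ is internally disjoint from $S$ by hypothesis; by the definition of skeleton the paths $P_0,P_1,P_2,P_3$ are pairwise disjoint and meet $C_0 \cup C_2 \cup Z$ only at their designated endpoints; and the $Z$-arc used in the $P$-free cross path is free to pass through the other two $z$-vertices as internal vertices, because the corresponding $P_i$'s are precisely the unused ones. The cyclic order $z_0,z_1,z_3,z_2$ guarantees that such a $Z$-arc always exists. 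Note that this also explains why the two excluded cases $\{c_0,c_2\}, \{c_1,c_3\}$ are genuinely exceptional: in them $P$ joins two $c$-vertices that are opposite in the cyclic order $c_0,c_1,c_2,c_3$, so no routing of $P$ can play the role of either cross path without forcing some $c_i$ to be visited twice.
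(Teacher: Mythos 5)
Your proposal is correct and takes essentially the same approach as the paper: both derive the contradiction by assembling, from $S$ together with $P$, an explicit cycle through $c_0, c_1, c_2, c_3$ in order, splitting on which (if any) ends of $P$ coincide with $c$-vertices. The paper compresses your three cases into a single formula by first using the skeleton's symmetries to assume that either $c_0 \in \rmend(P)$ or $\rmend(P) \cap \{c_0,c_1,c_2,c_3\} = \emptyset$, and then choosing orientations of $C_0$ and $C_2$ so that $C_0[v_0,c_0]\cup C_0[c_0,c_1]$ and $C_2[c_2,c_3]\cup C_2[c_3,v_2]$ are (possibly degenerate) arcs; one small inaccuracy in your write-up is the claim that the cyclic order of the $z_i$ is what guarantees the existence of the $Z$-arc in the $P$-free cross path, when in fact either arc of $Z$ between the two relevant $z$-vertices works regardless of that cyclic order.
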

\begin{proof} Let $P$ be a path in $G$ from $V(C_0)$ to $V(C_2)$ that is internally disjoint from $S$. 
By symmetry, we may assume that either $c_{0}$ is an end of $P$, or $\rmend(P)\cap \{c_0, c_1, c_2, c_3\} = \emptyset$. 
Then, we may assume $c_{2} \notin \rmend(P)$, as otherwise $\rmend(P)=\{c_0,c_2\}$ and we are done.  

For $i = 0,2$, let $v_i$ be the end of $P$ in $V(C_i)$, and fix an orientation of $C_i$ so that $c_{i+1}$ is not in the path $C_i[v_i, c_{i}]$. 
Fix any orientation of the cycle $Z$. For $j = 1,2$, let $z_j$ be the end of $P_j$ on $Z$. Now
\begin{align*}
C_0[v_0, c_{0}]&\cup C_0[c_{0}, c_{1}] \cup P_1 \cup Z[z_1, z_2]\\
&\cup P_2 \cup C_2[c_{2}, c_{3}] \cup C_2[c_{3}, v_2] \cup P
\end{align*}
is a cycle through $c_0, c_1, c_2, c_3$ in order, a contradiction.
\end{proof}

\begin{lemma}
\label{skeletonExists} Suppose that $G$ is a 7-connected graph with distinct vertices $c_0, c_1,c_2, c_3 \in V(G)$ so that $G$ has no cycle through $c_0, c_1, c_2, c_3$ in order. Then, up to cyclically permuting the labels of the vertices $c_0, c_1, c_2, c_3$, the graph $G$ contains a $(c_0, c_1, c_2, c_3)$-skeleton.
\end{lemma}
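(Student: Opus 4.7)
The plan is to build the skeleton in two stages: first, find two vertex-disjoint cycles $C_0, C_2$ containing the pairs $\{c_0, c_1\}$ and $\{c_2, c_3\}$; then attach a third cycle $Z$ via four internally disjoint paths $P_0, P_1, P_2, P_3$ in the correct cyclic order. The no-ordered-cycle hypothesis will enter twice: once (together with Dirac's theorem) to pin down the cyclic arrangement of the $c_i$ on a starting cycle, and once at the very end to force the correct cyclic order on $Z$.

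Since $G$ is $7$-connected and in particular $4$-connected, Dirac's theorem gives a cycle $C$ through $c_0, c_1, c_2, c_3$. Up to reversal, the cyclic order of these four vertices on $C$ is one of $(c_0, c_1, c_2, c_3)$, $(c_0, c_1, c_3, c_2)$, or $(c_0, c_2, c_1, c_3)$. The first is forbidden by hypothesis. A direct check shows that the cyclic relabeling $c_i \mapsto c_{(i+1) \bmod 4}$ both preserves the ``no ordered cycle'' hypothesis and converts the third ordering to a rotation of the second. So after at most one cyclic relabeling I may assume the cyclic order of $c_0, c_1, c_2, c_3$ on $C$ is $(c_0, c_1, c_3, c_2)$. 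The arcs $A := C[c_0, c_1]$ and $B := C[c_3, c_2]$ are then vertex-disjoint and capture the desired pairs.

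Next I would construct $C_0$ and $C_2$ by closing $A$ and $B$ with disjoint paths $Q_{01}$ (from $c_0$ to $c_1$) and $Q_{23}$ (from $c_2$ to $c_3$) that are internally disjoint from $A \cup B$. Since $G$ is $7$-connected it is in particular $2$-linked (by the Jung/Seymour/Thomassen theorem), and applying the $2$-linkage property inside a suitable subgraph (for instance, $G$ with the interiors of $A$ and $B$ temporarily deleted, after a judicious choice of $C$ keeping those interiors small) yields the required $Q_{01}, Q_{23}$. Set $C_0 := A \cup Q_{01}$ and $C_2 := B \cup Q_{23}$. With $C_0, C_2$ in hand, let $H := G - (V(C_0) \cup V(C_2))$; the remaining arcs $C[c_1, c_3]$ and $C[c_2, c_0]$ (minus their endpoints) embed into $H$. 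Invoking the $7$-connectivity of $G$ together with a Menger/fan argument applied to $H$ extended by the remaining edges from each $c_i$ into $H$, I would produce a cycle $Z \subseteq H$ together with four internally disjoint paths $P_0, P_1, P_2, P_3$ joining $c_0, c_1, c_2, c_3$ to four distinct vertices $z_0, z_1, z_2, z_3$ on $Z$.

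Finally, I would verify that the points $z_0, z_1, z_2, z_3$ occur on $Z$ in the cyclic order $(z_0, z_1, z_3, z_2)$. If they instead occurred in the cyclic order $(z_0, z_1, z_2, z_3)$, the concatenation
\[
C_0[c_0, c_1] \cup P_1 \cup Z[z_1, z_2] \cup P_2 \cup C_2[c_2, c_3] \cup P_3 \cup Z[z_3, z_0] \cup P_0
\]
would be a cycle through $c_0, c_1, c_2, c_3$ in order, contradicting the hypothesis; a symmetric argument excludes the remaining alternative $(z_0, z_2, z_1, z_3)$. So the attachments must appear on $Z$ in the cyclic order required by the skeleton definition, yielding $S = (C_0, C_2, Z, P_0, P_1, P_2, P_3)$.

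The main obstacle is the middle step: producing the cycle $Z$ together with the four internally disjoint paths into $H$. The $7$-connectivity of $G$ is strong, but after removing $V(C_0) \cup V(C_2)$ the residual connectivity of $H$ can drop well below the $4$ one would naively want for a cycle through four specified vertices. Overcoming this will likely require choosing $C$, $C_0$, $C_2$, $Z$, and the four paths \emph{simultaneously}, as a configuration extremal with respect to some parameter (for instance, maximizing the number of disjoint paths from $\{c_0, c_1, c_2, c_3\}$ into a common cycle disjoint from $C_0 \cup C_2$), together with an iterative rerouting step when a direct construction fails. Once the construction is in place, the cyclic-order verification is essentially automatic from the no-ordered-cycle hypothesis.
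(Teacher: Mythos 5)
Your plan is genuinely different from the paper's, and it has a real gap at exactly the place you flag. You propose to fix $C_0$ and $C_2$ first, delete them, and only then find a cycle $Z$ together with four internally disjoint feeder paths inside $H := G-(V(C_0)\cup V(C_2))$. But nothing guarantees $H$ supports such a configuration: $H$ need not be connected, and even if it is, it may contain no single cycle that can be hit by four disjoint approach paths from the $c_i$. Your proposed remedy---choose $(C, C_0, C_2, Z, P_0,\dots,P_3)$ extremally and reroute iteratively---is the crux of the lemma, not a routine detail, and the proposal does not carry it out. There are also two smaller soft spots earlier on: the 2-linkage step requires $G$ minus the interiors of $A$ and $B$ to remain $6$-connected, which does not follow just from wanting those interiors ``small''; and the linkage paths $Q_{01}, Q_{23}$ may pass through $C[c_1,c_3]$ or $C[c_2,c_0]$, so these arcs need not survive into $H$ as you assume.

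The paper sidesteps the residual-connectivity problem by building all of the pieces simultaneously rather than sequentially. It first shows, using Theorem~\ref{ellingham}, that $c_ic_{i+1}\notin E(G)$, and then duplicates each $c_i$ so that $7$-connectivity yields six disjoint paths between the $\{c_0,c_2\}$-side and the $\{c_1,c_3\}$-side; a short case analysis (and one cyclic relabeling) forces these to organize into two vertex-disjoint ``theta'' subgraphs $B_0$ (three internally disjoint $c_0$--$c_1$ paths) and $B_2$ (three internally disjoint $c_2$--$c_3$ paths). Since $G-\{c_0,c_1,c_2,c_3\}$ is $3$-connected, there are two disjoint connectors $R_0,R_1$ from $B_0$ to $B_2$, and the no-ordered-cycle hypothesis pins their four endpoints onto a single path of each theta. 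The cycle $Z$ is then assembled from one path of each theta together with $R_0,R_1$; the remaining two paths of each theta form $C_0$ and $C_2$; and the feeder paths $P_i$ fall out as leftover subpaths. Because $Z$, $C_0$, $C_2$, and the $P_i$ are all carved out of one structure, the paper never needs to argue about the connectivity of $H$ at this stage---that is deferred to Propositions~\ref{propSkeleton} and~\ref{propSkeleton3Conn}.
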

\begin{proof}
First, we may assume that 
\begin{itemize}
\item [(1)] $c_ic_{i+1}\notin E(G)$ for $i=0,1,2,3$ (with $c_4=c_0$). 
\end{itemize}
For, suppose (1) fails. Without loss of generality, assume $c_3c_0\in E(G)$. By Theorem~\ref{ellingham}, $G$ has a path $P$ between $c_0$ and $c_3$ such that $c_0,c_1,c_2,c_3$ occur on $P$ in that order. Now $P \cup c_3c_0$ is a cycle through $c_0, c_1, c_2, c_3$ in order, a contradiction.

\medskip

We may also assume that, by cyclically permuting the labels of the  vertices $c_0,c_1,c_2,c_3$ if necessary, 
\begin{itemize}
\item [(2)] there exist a family $B_0$ of three internally disjoint paths from $c_0$ to $c_1$ and a family  $B_2$ of 
three internally disjoint paths from $c_2$ to $c_3$, such that no path in $B_0$ intersects a path in $B_2$. 
\end{itemize}
To see this, we do the following. Let $G'$ be the graph obtained from $G$ by adding two copies of each of $c_0, c_1, c_2$, and  $c_3$. That is, for each $i \in \{0,1,2,3\}$, each copy of $c_i$ has the same neighborhood as $c_i$. Let $S_{0,2}\subseteq V(G')$ be the set consisting of $c_0$ and $c_2$ and all of their copies, and let $S_{1,3}\subseteq V(G')$ be the set consisting of $c_1$ and $c_3$ and all of their copies. So  $|S_{0,2}| = |S_{1,3}| = 6$. 

Since $G$ is 7-connected, $G'$ is 7-connected. Hence, $G'$ contains a set of six pairwise vertex-disjoint paths from $S_{0,2}$ to $S_{1,3}$.  These six paths in $G'$ correspond to six internally disjoint paths in $G$ with ends in $\{c_0,c_1,c_2,c_3\}$, and for $i = 0,1,2,3$, let $A_i$ be the set of all three corresponding paths in $G$ with $c_i$ as an end.

Suppose $A_0 \cap A_1 \neq \emptyset$ and $A_0 \cap A_3 \neq \emptyset$. Then $|A_0\cap A_1| < 3$ and so $A_1\cap A_2 \neq \emptyset$. Likewise $A_3 \cap A_2 \neq \emptyset$. But then the union of one path each from $A_0 \cap A_1$, $A_1 \cap A_2$, $A_2 \cap A_3$, and $A_3 \cap A_0$ is a cycle in $G$ through $c_0, c_1, c_2, c_3$ in order, a contradiction. 

So  $A_0 = A_1$ or $A_0 = A_3$. By symmetry, we may assume $A_0=A_1$ (by relabeling $c_0,c_1,c_2,c_3$ as $c_0, c_3,c_2,c_1$ in the reverse cyclic order). 
Then $A_2=A_3$. Now $A_0, A_2$ give rise to the desired $B_0,B_2$, completing the proof of (2). 

\medskip

We also view $B_0$ (respectively, $B_2$) as a subgraph of $G$ which is the union of the three paths in $B_0$ (respectively, $B_2$).
\begin{itemize}
\item [(3)] There exist vertex-disjoint paths $R_0$ and $R_1$ from $V(B_0)\setminus\{c_0,c_1\}$ to $V(B_2)\setminus\{c_2,c_3\}$ that are internally disjoint from $B_0\cup B_2$.
\end{itemize}
By (1), $|V(B_0)\setminus\{c_0,c_1\}|, |V(B_2)\setminus\{c_2,c_3\}|\geq 3$. Then (3) follows since $G-\{c_0, c_1, c_2, c_3\}$ is $3$-connected.

\medskip

For every $i \in \{0,1\}$ and $j \in \{0,2\}$, let $r_i^j$ be the end of the paths $R_i$ in $V(B_j)$. We claim that

\begin{itemize}
\item [(4)]  $r_0^0$ and $r_1^0$ are on the same path in $B_0$ and the  $r_0^2$ and $r_1^2$ are on the same path in $B_2$.
\end{itemize}
Suppose (4) fails and by symmetry between $B_0$ and $B_2$, we may assume that the vertices $r_0^0$ and $r_1^0$ are on different paths in $B_0$. 
Then $B_0$ contains both a path $P_0$ through $r_0^0, c_0, c_1, r_1^0$ in order, as well as  a path $P_0'$ through $r_0^0, c_1, c_0, r_1^0$ in order. 
Note that  $B_2$ contains either a path $P_2$ through $r_0^2, c_2, c_3, r_1^2$ in order or a path $P_2'$ through $r_0^2, c_3, c_2, r_1^2$ in order. 
Then either $P_2 \cup R_0 \cup P_0'\cup R_1$ or $P_2' \cup R_0 \cup P_0\cup R_1$
is a cycle in $G$ through $c_0, c_1, c_2, c_3$ in order, a contradiction. 

\medskip
For $j = 0,2$, let $P_0^j, P_1^j, P_2^j$ be the paths in $B_j$ and, by (4), assume that the ends of $R_0$ and $R_1$ in $V(B_j)$ are on the path $P_0^j$. 
Relabel the paths $R_0$ and $R_1$ if necessary so that   $c_0, r_0^0, r_1^0, c_1$ occur on $P_0^0$ in that order. 

Note $c_2, r_0^2, r_1^2, c_3$ occur on $P_0^2$ in that order. For, otherwise, the cycle
$P_0^0[r_0^0, c_0]\cup P_1^0\cup P_0^0[c_1, r_1^0]\cup R_1\cup P_0^2[r_1^2, c_2]\cup P_1^2\cup P_0^2[c_3, r_0^2]\cup R_0$ is a cycle in $G$ through  $c_0, c_1, c_2, c_3$ in order, a contradiction.

Let $Z\coloneqq P_0^0[r_0^0, r_1^0]\cup R_1\cup P_0^2[r_1^2, r_0^2]\cup R_0$. Then 
$$S \coloneqq (P_1^0\cup P_2^0, P_1^2\cup P_2^2, Z, P_0^0[c_0, r_0^0], P_0^0[c_1, r_1^0], P_0^2[c_2, r_0^2], P_0^2[c_3, r_1^2])$$ is a $(c_0, c_1, c_2, c_3)$-skeleton.
\end{proof}

\section{Skeletons with Connectivity Properties}
\label{2Conn}
This section is dedicated to proving the following.

\begin{proposition}
\label{propSkeleton}
Suppose that $G$ is a $7$-connected graph, $c_0, c_1, c_2, c_3  \in V(G)$ are distinct, and $G$ has no cycle through $c_0, c_1, c_2, c_3$ in order. 
Then, up to cyclically permuting the labels of the vertices $c_0, c_1, c_2, c_3$, the graph $G$ has a $(c_0, c_1, c_2, c_3)$-skeleton $S=(C_0, C_2, Z, P_0, P_1, P_2, P_3)$ such that
\begin{enumerate}
\item for every $i \in \{0,2\}$, the graph $G[V(C_i)]$ has no cycle through $c_{i}$ and $c_{i+1}$ with fewer vertices than $C_i$,
\item for every $j \in \{0,1,2,3\}$, $|V(P_j)|=2$, and
\item the graph $G-(V(C_0)\cup V(C_2))$ is 2-connected.
\end{enumerate}
\end{proposition}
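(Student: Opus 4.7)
The plan is to start from a $(c_0, c_1, c_2, c_3)$-skeleton provided by Lemma~\ref{skeletonExists}, refine it through a lexicographic minimality argument to secure (i) and (ii), and finally establish (iii) by a contradiction argument that uses the separating-pair lemmas of Section~\ref{separatingPairs}.

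More concretely, after possibly cyclically permuting $c_0, c_1, c_2, c_3$ so that Lemma~\ref{skeletonExists} applies, I would choose among all $(c_0, c_1, c_2, c_3)$-skeletons $S = (C_0, C_2, Z, P_0, P_1, P_2, P_3)$ one that minimizes lexicographically the triple
\[
\bigl(|V(C_0)|+|V(C_2)|,\ \textstyle\sum_{j=0}^{3}|V(P_j)|,\ |V(Z)|\bigr).
\]
Property (i) follows because if some $G[V(C_i)]$ contained a cycle $C_i'$ through $c_i$ and $c_{i+1}$ with fewer vertices than $C_i$, replacing $C_i$ by $C_i'$ would yield a valid skeleton (the paths $P_j$, being internally disjoint from $V(C_i) \supseteq V(C_i')$, remain internally disjoint from $V(C_i')$), contradicting the minimality of the first coordinate. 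For (ii), suppose $|V(P_j)|>2$ for some $j$, and let $u_j$ be the neighbor of $c_j$ on $P_j$, so $u_j \notin V(C_0) \cup V(C_2) \cup V(Z)$. I would modify $S$ by setting $P_j' \coloneqq c_j u_j$ and rerouting $Z$ through $u_j$, absorbing the subpath $P_j[u_j, z_j]$ into $Z$ and making $u_j$ the new attach point in the prescribed cyclic position; this strictly decreases the second coordinate $\sum_j |V(P_j)|$ without increasing the first, contradicting minimality. The tertiary coordinate $|V(Z)|$ is used to break ties and to control degeneracies.

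For (iii), suppose for contradiction that $H \coloneqq G - (V(C_0) \cup V(C_2))$ is not $2$-connected. By (ii), $V(H) \cap V(P_j) = \{z_j\} \subseteq V(Z)$, so the cycle $Z$ lies in a single component of $H$ and survives deletion of any one vertex as a connected piece (since $Z - v$ is either $Z$ itself or a path). Consequently, the failure of $2$-connectivity in $H$ isolates some nonempty $A \subseteq V(H) \setminus V(Z)$ whose neighborhood in $G$ is contained in $V(C_0) \cup V(C_2)$, possibly together with a single cut vertex $v$ of $H$. Taking $A$ to be a component of the relevant cut (so that $G[A]$ is connected), apply Lemma~\ref{mainLemma} to obtain minimum $(c_0, c_1, C_0, A)$- and $(c_2, c_3, C_2, A)$-separating pairs; condition (i) supplies the shortest-cycle hypothesis these lemmas require. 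Then invoke Lemma~\ref{pathExistence}, in particular its parts (ii) and (iii), to extract paths from $G[V(C_0) \cup A]$ and $G[V(C_2) \cup A]$ through prescribed vertices. Stitching these paths with the arcs of $Z$ and the single-edge paths $P_j$ (available by (ii)) produces a cycle through $c_0, c_1, c_2, c_3$ in order, contradicting the standing hypothesis.

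The main obstacle is (iii): producing the ordered cycle from the path outputs of Lemma~\ref{pathExistence} requires a careful case analysis that distinguishes whether $H$ is disconnected or merely has a cut vertex, and one must orchestrate the pieces --- paths from Lemma~\ref{pathExistence}, arcs of $Z$, and the edges $P_j$ --- so that the resulting closed walk is a simple cycle respecting the cyclic order of $c_0, c_1, c_2, c_3$. The modification step in (ii) is also delicate, since one must preserve the cyclic order $z_0, z_1, z_3, z_2$ on the modified cycle $Z'$.
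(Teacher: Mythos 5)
Your overall plan—Lemma~\ref{skeletonExists}, a lexicographically extremal skeleton, then separating pairs plus Lemma~\ref{pathExistence}—is in the right spirit, but two of the steps as written do not work, and they are not minor.

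First, the rerouting argument for (ii) is flawed. If $u_j$ is the neighbor of $c_j$ on $P_j$ and you set $P_j' := c_ju_j$, then ``absorbing $P_j[u_j,z_j]$ into $Z$'' does not produce a cycle: $Z\cup P_j[u_j,z_j]$ is a cycle with a pendant path attached at $z_j$, and the only cycle in that graph is $Z$ itself. To route a cycle $Z'$ through $u_j$ you need a \emph{second} path from $\rmint(P_j)$ back to the skeleton avoiding $z_j$, whose existence is not free. The paper's proof of this step (Claim~\ref{edges}) first rules out spurious components of $H-z_0$ via Claim~\ref{emptyIntersection}, and then splits into two cases: when $\rmint(P_0)$ and $z_1$ lie in the same component of $H-z_0$, a shortest path $P$ from $\rmint(P_0)$ to $(V(S)\cap V(H))\setminus V(P_0)$ either gives the ordered cycle directly or a skeleton with strictly smaller $\sum|V(P_j)|$; when they lie in different components, a separating-pair argument is needed. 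Your one-line shortcut skips both cases.

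Second, your choice of extremal coordinates $\bigl(|V(C_0)|+|V(C_2)|,\ \sum_j|V(P_j)|,\ |V(Z)|\bigr)$ is not the right one for (iii). The crucial move in the paper's argument is to apply part (ii) of Lemma~\ref{pathExistence} to obtain a cycle $C_0'$ in $G[V(C_0)\cup A]-\rmint(R_0)$ through $c_0,c_1$, and then observe that replacing $C_0$ by $C_0'$ produces a skeleton for which $H$ strictly grows in a controlled way. That $C_0'$ may \emph{contain vertices of $A$} and can be \emph{longer} than $C_0$; it merely avoids $\rmint(R_0)$. So your first coordinate need not decrease, and your minimization gives no contradiction. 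This is precisely why the paper's extremal choice points in the opposite direction: subject to minimizing $\sum_j|V(P_j)|$, it \emph{maximizes} the block $B$ of $H$ containing $Z$, and then lexicographically maximizes the sizes of the components of $H-V(B)$, first those meeting $V(S)$ and then the rest. Those coordinates are exactly what make the "$C_0\to C_0'$ grow $H$" moves in Claims~\ref{noShorterCycle} and \ref{emptyIntersection} decrease the objective. Your $|V(Z)|$ tiebreaker plays no identifiable role.

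Finally, your sketch for (iii) also omits the structural intermediate step that the paper proves and reuses (Claim~\ref{emptyIntersection}: no component of $H-V(B)$ is disjoint from $V(S)$), and with it the case split on where the two edges $uv,u'v'$ from the ``inside'' to the ``outside'' land (both in $B$ versus not), including finding the second such edge via $7$-connectivity and a bound $|T|\le 5$. As written, ``stitching these paths with the arcs of $Z$'' is not enough to conclude; in the difficult subcase you need the $2$-connectivity of $B$ to produce the two disjoint paths to $z_2',z_3'$ before Lemma~\ref{pathExistence}(iii) becomes usable.
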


\begin{proof}
By Lemma \ref{skeletonExists}, some $(c_0, c_1, c_2, c_3)$-skeleton exists. Let $S=(C_0, C_2, Z, \allowbreak P_0, P_1, \allowbreak P_2, P_3)$ be a  $(c_0, c_1, c_2, c_3)$-skeleton,  $H=G-(V(C_0)\cup V(C_2))$, and 
 $B$ be the block of $H$ containing $Z$. Let $B_1, B_2, \ldots B_m$ be the components of $H - V(B)$ with non-empty intersection with $V(S)$, such that $|V(B_1)| \geq |V(B_2)| \geq \ldots \geq |V(B_m)|$. 
Let $A_1, A_2, \ldots A_{n}$ be the components of $H - V(B)$ with empty intersection with $V(S)$, such that $|V(A_1)| \geq |V(A_2)|\geq \allowbreak \ldots \geq |V(A_{n})|$.
We choose $S$ so that 
\begin{itemize}
\item [(1)] the sum $\sum_{i=0}^3|V(P_i)|$ is minimum,
\item [(2)] subject to (1), $|V(B)|$ is maximum,
\item [(3)] subject to (2), $(|V(B_1)|, |V(B_2)|, \allowbreak \ldots, \allowbreak |V(B_m)|)$ is maximal with respect to lexicographic ordering, and
\item [(4)] subject to (3),  $(|V(A_1)|, |V(A_2)|, \ldots, |V(A_{n})|)$ is maximal with respect to lexicographic ordering.
\end{itemize}
For $i = 0,1,2,3$, let $z_i$ be the end of $P_i$ on $Z$. We will show that the skeleton $S$ satisfies conclusions (i), (ii), and (iii). First we show that it satisfies (i).

\begin{claim}
\label{noShorterCycle}
For each $i \in \{0,2\}$, the graph $G[V(C_i)]$ has no cycle through $c_i$ and $c_{i+1}$ with fewer vertices than $C_i$.
\end{claim}
\begin{proof}
By symmetry between $C_0$ and $C_2$, it suffices to consider the case $i=0$. Suppose $C_0'$ is a cycle in $G[V(C_0)]$ through $c_0$ and $c_1$ such that $|V(C_0')|<|V(C_0)|$. 
Then $V(C_0') \subsetneq V(C_0)$. Write $S' \coloneqq (C_0', C_2, Z, \allowbreak P_0, P_1, P_2, P_3)$ and $H' \coloneqq G-(V(C_0')\cup V(C_2))$, and let $B'$ be the block of $H'$ containing $V(Z)$.
Then $S'$ is a $(c_0, c_1, c_2, c_3)$-skeleton and $V(H) \subsetneq V(H')$. 

Note that (1) holds for $S'$. Since $V(B)\subseteq V(B')$, by (2) above,  we have $V(B)=V(B')$. Let $U$ be a component of $G[V(C_0)]-V(C_0')$. If $N(U)\cap V(B_i)\ne \emptyset$ for some $i\in \{1, \ldots, m\}$ 
then $S'$ contradicts the choice of $S$ via (3). Otherwise,  $S'$ contradicts the choice of $S$ via (4). 
\end{proof}

The following is a convenient step to take on the way to proving that conclusions (ii) and (iii) are satisfied.
\begin{claim}
\label{emptyIntersection}
There is no component of $H-V(B)$ with empty intersection with $V(S)$.
\end{claim}
\begin{proof}
Suppose otherwise. Then the component $A_{n}$ exists. Since $B$ is a block of $H$ and $A_n$ is a component of $H-V(B)$, it follows that $A_n$ has no more than one neighbor in $H$. Then since $G$ is $7$-connected, $A_n$ has at least three neighbors on $C_0$ or $C_2$. By symmetry between $C_0$ and $C_2$, we may assume that $|N(A_{n})\cap V(C_0)|\geq 3$. Let $\{R_0, R_1\}$ be a minimum $(c_0, c_1, C_0, V(A_{n}))$-separating pair.

Now define $X \coloneqq \textrm{int}(R_0)\cup \textrm{int}(R_1)\cup A_n$, $T \coloneqq \textrm{end}(R_0)\cup \textrm{end}(R_1)\cup (N(A_n)\cap V(H))$, and $Y \coloneqq V(G)\setminus (X \cup T)$. Then $|Y|\geq 2$ and $|T|\leq 5$. Thus since $G$ is $7$-connected, there exist edges $uv, u'v' \in E(G)$ with $u,u' \in X$, $v,v' \in Y$, $u \neq u'$, and $v \neq v'$. By Lemma \ref{anySkeleton} and since $|N(A_{n})\cap V(C_0)|\geq 3$, we have that $N(A_n)\cap V(C_2)= \emptyset$. Then by part (ii) of the definition of a separating pair, $N(A_n)\cap V(G-H)\subseteq V(R_0\cup R_1)$. Thus we have that $u,u' \in \textrm{int}(R_0)\cup \textrm{int}(R_1)$. Then by Lemma \ref{anySkeleton} and the definition of a separating pair, we have that $v, v' \in V(H)\setminus (X \cup T)$. Now, up to symmetry between $R_0$ and $R_1$, we distinguish the following two cases.

First, suppose that either both $u,u' \in \rmint(R_0)$, or $u \in \rmint(R_0)$ and $v \notin V(B)$. By Claim \ref{noShorterCycle} and part (ii) of Lemma \ref{pathExistence}, $G[V(C_0)\cup V(A_n)]-\textrm{int}(R_0)$ contains a cycle $C_0'$ through $c_0$ and $c_1$. Write $S' \coloneqq (C_0', C_2, Z, P_0,$ $P_1, P_2, P_3)$ and $H' \coloneqq G-(V(C_0')\cup V(C_2))$, and let $B'$ be the block of $H'$ containing $V(Z)$. Then $S'$ is a $(c_0, c_1, c_2, c_3)$-skeleton (so (1) holds for 
$S'$) and  $(V(H)\setminus V(A_n))\cup \textrm{int}(R_0) \subseteq V(H')$.  If both $v,v' \in V(B)$ then both $u,u' \in \rmint(R_0)$ and so $|V(B')|>|V(B)|$. Otherwise, either $u,u' \in \rmint(R_0)$ and $\{v,v'\}\not\subseteq V(B)$, or $u \in \rmint(R_0)$ and $v \notin V(B)$. In either case, one of the components $B_1, B_2, \ldots, B_m, A_1, A_2, \ldots, A_{n-1}$ grows, a contradiction.

Otherwise, without loss of generality, we may assume that $u\in \rmint(R_0)$, $u' \in \rmint(R_1)$, and both $v,v' \in V(B)$. For $i=2,3$, let $z_i'$ be the vertex in $V(P_i)\cap V(B)$ that is closest to $c_i$ on $P_i$. Then since $v$ and $v'$ are distinct and $B$ is 2-connected, $B$ contains disjoint paths $R_2,R_3$ from $\{v,v'\}$ to $z_2',z_3'$, respectively. By Claim \ref{noShorterCycle} and part (iii) of Lemma \ref{pathExistence}, $G[V(C_0)\cup V(A_n)]$ contains paths $P$ and $P'$ between $u$ and $u'$, such that $P$ goes through $u', c_0, c_1, u$ in order and $P'$ goes through 
$u,c_0,c_1,u'$ in order. Then, fixing an arbitrary cyclic order of $C_2$, if $v\in V(R_2)$ and $v'\in V(R_3)$ then  
$$P \cup uv \cup R_2 \cup P_2[z_2', c_2] \cup C_2[c_2, c_3]\cup P_3[z_3',c_3] \cup R_3 \cup v'u'$$is a cycle through $c_0, c_1, c_2, c_3$ in order, a contradiction.
If $v'\in V(R_2)$ and $v\in V(R_3)$ then $$P'\cup u'v' \cup R_2 \cup P_2[z_2', c_2] \cup C_2[c_2, c_3]\cup P_3[z_3', c_3] \cup R_3 \cup vu$$ 
is a cycle through $c_0, c_1, c_2, c_3$ in order, a contradiction.
\end{proof}

We now prove that conclusion (ii) of the proposition holds.

\begin{claim}
\label{edges}
For every $j \in \{0,1,2,3\}$, $|V(P_j)| = 2$.
\end{claim}
\begin{proof}
By symmetry, we prove the case for $j=0$. Suppose for contradiction that $|V(P_0)|>2$. Let $X_0$ be the component of $H - z_0$ containing $\textrm{int}(P_0)$, and let $X_1$ be the component of $H-z_0$ containing $z_1$. 

Suppose that $H-z_0$ has a component $A$ other than $X_0$ and $X_1$. Then since the graph $S-(V(C_0) \cup V(C_2)\cup \{z_0\})$ has two components, one containing $\textrm{int}(P_0)$ and the other containing $z_1$, it follows that $A\cap V(S) = \emptyset$. Since $B$ is 2-connected, $z_0 \in V(B)$, and $ (B-z_0) \cap V(S) \neq \emptyset$, $A \cap B = \emptyset$. Then $A$ is a component of $H-V(B)$ with $V(A) \cap V(S) = \emptyset$, a contradiction to Claim \ref{emptyIntersection}. So $H-z_0$ has no components other than $X_0$ and $X_1$. Fix a cyclic ordering of $Z$ so that $z_0, z_1, z_3, z_2$ occur in that cyclic order.
\smallskip

Case 1. $X_0=X_1$. 

Then let $P$ be a path in $H-\{z_0\}$ of minimum length so that $P$ has one end in $\textrm{int}(P_0)$ and one end in $(V(S)\cap V(H))\setminus V(P_0)$. Let $u$ be the end of $P$ in $\textrm{int}(P_0)$ and let $v$ be the other end of $P$.

Suppose $v \in V(P_3)\cup V(Z(z_1, z_2))$. Then there is a path $R$ with ends $c_0$ and $c_3$ in $(P_0\cup P_3\cup Z(z_1, z_2)\cup P)-z_0$. Then, fixing arbitrary cyclic orderings of $C_0$ and $C_2$, the graph $C_0[c_0, c_1]\cup P_1 \cup Z[z_2, z_1]\cup P_2 \cup C_2[c_2, c_3]\cup R$ is a cycle through $c_0, c_1, c_2, c_3$ in order, a contradiction.

So $v \in (Z[z_2, z_1]\setminus \{z_0\}) \cup \textrm{int}(P_1) \cup \textrm{int}(P_2)$. In this case we will find a $(c_0, c_1, c_2, c_3)$-skeleton $S'=(C_0, C_2, Z', P_0', P_1', P_2', P_3')$ with $\sum_{j=0}^3|V(P_j')|<\sum_{j=0}^3|V(P_j)|$, which is a contradiction to the choice of $S$. If $v \in \textrm{int}(P_2)$, then define $P_0' \coloneqq P_0[c_0, u]$, $P_1' \coloneqq P_1$, $P_2' \coloneqq P_2[c_2, v]$, $P_3' \coloneqq P_3$, and $Z' \coloneqq P_0[u, z_0]\cup Z[z_0, z_2]\cup P_2[z_2, v]\cup P$. If $v \in Z[z_2, z_0)$, then define $P_0' \coloneqq P_0[c_0, u]$, $P_j' \coloneqq P_j$ for $j = 1,2,3$, and $Z' \coloneqq P_0[u, z_0]\cup Z[z_0, v]\cup P$. The remaining cases are similar.

\smallskip
Case 2. $X_0\ne X_1$. 

We have shown that $X_0$ and $X_1$ are the only components of $H-\{z_0\}$, and that they are distinct. Now let $\{R_0, R_1\}$ be a minimum $(c_0, c_1, C_0, V(X_0))$-separating pair. Define $T \coloneqq \textrm{end}(R_0)\cup \textrm{end}(R_1) \cup \{z_0, c_2\}$, $X \coloneqq X_0 \cup \textrm{int}(R_0)\cup \textrm{int}(R_1)$, and $Y \coloneqq V(G)\setminus (X \cup T)$. Then $|T| \leq 6$ and both $X$ and $Y$ are non-empty. So there is an edge $uv \in E(G)$ with $u \in X$ and $v \in Y$.

Suppose that $v \in V(C_2)$. Fix a cyclic ordering of $C_0$ so that $u \notin V(C_0[c_0, c_1])$ and a cyclic ordering of $C_2$ so that $v \notin V(C_2[c_2, c_3))$. Define $R \coloneqq C_0[c_0, c_1]\cup P_1 \cup Z[z_1, z_2]\cup P_2 \cup C_2[c_2, c_3]$. Then $R$ is a path with ends $c_0$ and $c_3$ that contains the vertices $c_0, c_1, c_2, c_3$ in order. Since $G-\textrm{int}(R)$ has a path from $c_0$ to $c_3$ (using $X$ and $uv$), the graph $G$ has a cycle through $c_0, c_1, c_2, c_3$ in order, a contradiction.

By the definition of a separating pair, $v \notin V(C_0)$. Thus $v \in X_1$ and $u \in \textrm{int}(R_0)\cup \textrm{int}(R_1)$. For $i=2,3$, let $u_i$ be the neighbor of $c_i$ on $P_i$. 

We claim that the graph $H-V(X_0)$ contains disjoint paths $R_2,R_3$ from $\{v,z_0\}$ to $u_2,u_3$, respectively. To see this, let $P$ be a minimum-length path in $X_1$ from $v$ to a vertex $p\in V(S) \cap V(X_1)$. If $p \in V(Z(z_0, z_2)) \cup V(P_1) \cup V(P_3)$, then $H-V(X_0)$ contains a $(\{u_2, z_0\}, \{u_3, v\})$-linkage. Otherwise, $p \in V(P_2) \cup V(Z[z_2, z_0))$ and $H-V(X_0)$ contains a $(\{u_2, v\}, \{u_3, z_0\})$-linkage. 

First suppose  $v\in V(R_2)$ and $z_0\in V(R_3)$. Fix a cyclic ordering of $C_0$ so that $u \notin V(C[c_0, c_1])$ and fix an arbitrary cyclic ordering of $C_2$. Then $$C_0[c_0, u]\cup uv \cup R_2 \cup u_2c_2 \cup C_2[c_2, c_3]\cup c_3u_3 \cup R_3 \cup P_0$$is a cycle through $c_0, c_1, c_2, c_3$ in order, a contradiction.

So assume $z_0\in V(R_2)$ and $v\in V(R_3)$. Fix any cyclic ordering of $C_2$. By Claim \ref{noShorterCycle} and part (i) of Lemma \ref{pathExistence}, there is an edge $ax \in E(G)$ with $x \in X_0$ so that the graph $G[V(C_0)]$ contains a path $P$ with ends $u$ and $a$ through $u, c_0, c_1, a$ in order. Let $P'$ be a path in $G[V(X_0) \cup \{z_0\}]$ with ends $x$ and $z_0$. Then $$P\cup ax\cup P' \cup R_2\cup u_2c_2\cup C_2[c_2, c_3]\cup c_3u_3\cup R_3\cup vu$$ is a path through $c_0, c_1, c_2, c_3$ in order. This is a contradiction, and completes the proof of the claim.
\end{proof}

Now we are ready to finish the proof of Proposition \ref{propSkeleton}. Parts (i) and (ii) hold by Claims \ref{noShorterCycle} and \ref{edges}, respectively. 
Also by Claim \ref{edges}, $V(S) \cap V(H) \subseteq V(B)$. Then every component of $H-V(B)$ has empty intersection with $V(S)$. So by Claim \ref{emptyIntersection}, $V(H) = V(B)$ and thus $H$ is 2-connected; so (iii) holds.
\end{proof}

\section{Raising the Connectivity}
\label{3Conn}
In this section we strengthen conclusion (iii) of Proposition \ref{propSkeleton} so that we will be able to apply Lemma \ref{3connPlanar}. That is, we prove the following:

\begin{proposition}
\label{propSkeleton3Conn}
Suppose that $G$ is a $7$-connected graph with distinct vertices $c_0,$ $c_1, c_2, c_3  \in V(G)$ so that $G$ has no cycle through $c_0, c_1, c_2, c_3$ in order. Then, up to cyclically permuting the labels of the vertices $c_0, c_1, c_2, c_3$, the graph $G$ has a $(c_0, c_1, c_2, c_3)$-skeleton $S=(C_0, C_2, Z, P_0, P_1, P_2, P_3)$ so that:
\begin{enumerate}
\item for every $i \in \{0,2\}$, the graph $G[V(C_i)]$ has no cycle through $c_{0}$ and $c_{1}$ with fewer vertices than $C_i$,
\item for every $j \in \{0,1,2,3\}$, $|V(P_j)|=2$, and
\item the graph $G-(V(C_0)\cup V(C_2))$ is 3-connected.
\end{enumerate}
\end{proposition}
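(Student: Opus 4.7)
The plan is to start with a skeleton $S = (C_0, C_2, Z, P_0, P_1, P_2, P_3)$ furnished by Proposition \ref{propSkeleton} and to choose it optimally under the same lexicographic preferences used in the proof of Proposition \ref{propSkeleton}: first the minimum $\sum_j |V(P_j)|$, then maximum $|V(H)|$, then sorted block sizes of $H$ maximum, then sorted component sizes of $H - V(B)$ maximum, where $B$ is the block of $H$ containing $Z$. Under these choices the arguments in Claims \ref{noShorterCycle}, \ref{emptyIntersection}, and \ref{edges} still apply, so conclusions (i) and (ii) hold and $H$ is 2-connected. Toward conclusion (iii), suppose for contradiction that $\{x, y\}$ is a 2-cut of $H$ and let $D_1, \dots, D_k$ be the components of $H - \{x, y\}$.

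The first reduction is that every $D_i$ contains at least one of $z_0, z_1, z_2, z_3$. If some $D$ contains none of them, then $N_G(D) \subseteq \{x, y\} \cup V(C_0) \cup V(C_2)$ and 7-connectivity forces $|N_G(D) \cap (V(C_0) \cup V(C_2))| \geq 5$. Lemma \ref{anySkeleton} then implies that, up to symmetry, $|N(D) \cap V(C_0)| \geq 5$ while $N(D) \cap V(C_2) = \emptyset$, for otherwise a path through $D$ would be a path from $V(C_0)$ to $V(C_2)$ internally disjoint from $S$ with ends neither $\{c_0, c_2\}$ nor $\{c_1, c_3\}$. Take a minimum $(c_0, c_1, C_0, V(D))$-separating pair $\{R_0, R_1\}$ and form $T = \rmend(R_0) \cup \rmend(R_1) \cup \{x, y\}$ (of size at most $6$), $X = V(D) \cup \rmint(R_0) \cup \rmint(R_1)$, $Y = V(G) \setminus (T \cup X)$. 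Exactly as in Claim \ref{emptyIntersection}, 7-connectivity of $G$ yields vertex-disjoint edges from $X$ to $Y$. Parts (ii) and (iii) of Lemma \ref{pathExistence} then either produce a cycle through $c_0, c_1$ on $V(C_0) \cup V(D)$ shorter than $C_0$ or strictly increase one of the lexicographic quantities (contradicting our choice of $S$), or splice with paths through the 2-connected $H$ to build a cycle through $c_0, c_1, c_2, c_3$ in order, a contradiction.

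In the remaining case every component of $H - \{x, y\}$ meets $\{z_0, z_1, z_2, z_3\}$. Since $z_0, z_1, z_3, z_2$ appear on $Z$ in this cyclic order, the partition of $\{z_0, z_1, z_2, z_3\}$ induced by the components of $H - \{x, y\}$ is highly restricted: the only balanced splits that can arise are the \emph{aligned} split $\{z_0, z_1\} \mid \{z_2, z_3\}$ and the \emph{diagonal} split $\{z_0, z_2\} \mid \{z_1, z_3\}$, together with various unbalanced splits where one component contains a single $z_i$. For each $D_i$, I apply Lemma \ref{anySkeleton} to constrain $N(D_i) \cap (V(C_0) \cup V(C_2))$ and use 7-connectivity, via $|\{x, y\}| = 2$, to force enough attachments to $C_0 \cup C_2$; minimum separating pairs on $C_0$ and $C_2$ together with Lemma \ref{pathExistence}(iii) then either produce an ordered cycle through $c_0, c_1, c_2, c_3$ or shrink $C_0$ or $C_2$ contradicting the maximality of $S$. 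The main obstacle is the diagonal split $\{z_0, z_2\} \mid \{z_1, z_3\}$: here the 2-cut separates the attachment to $c_0, c_2$ from the attachment to $c_1, c_3$, and building an ordered cycle requires a careful route that uses Lemma \ref{pathExistence}(iii) on $C_0$ to traverse $c_0$ then $c_1$, crosses the 2-cut through one $D_i$, uses Lemma \ref{pathExistence}(iii) on $C_2$ to traverse $c_2$ then $c_3$, and returns through another $D_i$, while keeping the cyclic order $c_0, c_1, c_2, c_3$ intact. I expect this last subcase to absorb most of the work.
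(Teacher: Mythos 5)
Your proposal shares the general spirit of the paper's argument---separating pairs on $C_0$ and $C_2$ plus explicit ordered-cycle constructions---but there are a few genuine gaps.

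First, the ``first reduction'' you propose is not quite right. You argue that a component $D$ of $H - \{x,y\}$ missing all of $z_0,\dots,z_3$ would give a path from $V(C_0)$ to $V(C_2)$ internally disjoint from $S$, forbidden by Lemma~\ref{anySkeleton}. But $D$ may contain vertices of $V(Z)$ even while missing the $z_i$'s (for example when $x,y\in V(Z)$ lie between two consecutive attachment vertices and $D$ contains the short arc between them); in that case any path from $V(C_0)$ to $V(C_2)$ through $D$ meets $Z$ internally and Lemma~\ref{anySkeleton} says nothing. The paper never establishes this reduction. Instead it chooses the component $A$ of $H-T$ with $|V(A)\cap\{z_0,z_1,z_2,z_3\}|$ \emph{minimum}, and then handles the case $V(A)\cap\{z_0,\dots,z_3\}=\emptyset$ directly by routing a cycle through $A\cup C_0$ and $C_2$---no Lemma~\ref{anySkeleton}-style disjointness is needed, and no lexicographic minimality is invoked anywhere in the proof of Proposition~\ref{propSkeleton3Conn}.

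Second, it is unclear that the lexicographic preferences from Proposition~\ref{propSkeleton} buy you anything here. They were designed to detect growth of the block $B$ containing $Z$ or of components outside $B$. But once $H$ is $2$-connected, $B=H$, so $H-V(B)$ is empty and the lex.\ tuple is degenerate. After replacing $C_0$ by a shorter cycle $C_0'$ through $c_0,c_1$ in $G[V(C_0)\cup V(D)]$, the new $H'$ gains some $C_0$-vertices but loses some $D$-vertices, and its block containing $Z$ need not be strictly larger. You would have to verify that some coordinate actually increases, and I do not see how.

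Third, what you call the ``diagonal split'' ($z_0,z_2$ vs.\ $z_1,z_3$) is not the hard case that ``absorbs most of the work''; the paper rules it out. With the minimizing choice of $A$, if $z_0,z_2\in V(A)$ then the paper takes minimum separating pairs on $C_0$ \emph{and} on $C_2$ simultaneously, forms a $6$-element cut $T'$, uses $7$-connectivity to find an escaping edge, and constructs an ordered cycle via Lemma~\ref{pathExistence}(i) and a linkage across $A$ and $H-V(A)$. This gives $z_2,z_3\notin V(A)$, so the remaining case is $V(A)\cap\{z_0,\dots,z_3\}\subseteq\{z_0,z_1\}$, possibly empty; that case is then closed with one more separating pair on $C_0$ (computed in $G-(T\cup C_2)$), a careful application of $7$-connectivity, and a short Menger argument to route disjoint paths from $\{t,v\}$ (for some $t\in T$) to $\{z_2,z_3\}$ inside $H-V(A)$. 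The two ideas your sketch is missing are (a) choosing $A$ to minimize $|V(A)\cap\{z_0,\dots,z_3\}|$, which collapses the case analysis, and (b) applying separating pairs to $C_0$ and $C_2$ at the same time when $A$ meets both, which is what makes the diagonal case vanish rather than dominate.
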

\begin{proof}
Let $S=(C_0, C_2, Z, P_0, P_1, P_2, P_3)$ be a $(c_0, c_1, c_2, c_3)$-skeleton as in Proposition \ref{propSkeleton}. For every $i \in \{0,1,2,3\}$, let $z_i$ be the end of $P_i$ on $Z$. Define $H \coloneqq G-(V(C_0)\cup V(C_2))$. The only thing we need to prove is that $H$ is $3$-connected. Suppose $H$ is not $3$-connected. Let $T\subseteq V(H)$ and $A$ be a component of $H-T$ so that $|T| \leq 2$, $H-T$ is not connected, and $|V(A) \cap \{z_0, z_1, z_2, z_3\}|$ is minimum. Since $G$ is $7$-connected, there exists $i \in \{0,2\}$ so that $N(A) \cap (V(C_i)\setminus \{c_i, c_{i+1}\})$ is non-empty. So by symmetry, we may assume that either $A \cap \{z_0, z_1, z_2, z_3\} = \emptyset$ and $N(A) \cap (V(C_0)\setminus \{c_0, c_{1}\})$ is non-empty, or $z_0 \in V(A)$. First we prove two claims.

\begin{claim}
$z_2,z_3 \notin V(A)$.
\end{claim}
\begin{proof}
Suppose otherwise. Then $V(A) \cap \{z_0, z_1, z_2, z_3\}\ne \emptyset$, and so $z_0 \in V(A)$. Then by the choice of $A$ and since $H-T$ is disconnected, $|V(A) \cap \{z_0, z_1, z_2, z_3\}|=2$. 
Indeed, $V(A) \cap \{z_0, z_1, z_2, z_3\}=\{z_0, z_2\}$. For, otherwise,  $V(A) \cap \{z_0, z_1, z_2, z_3\}=\{z_0, z_3\}$. Then, since $A$ is connected and $H-V(A)$ is connected,  
$H$ has a $(\{z_0, z_3\}, \{z_1, z_2\})$-linkage; so $G$ has a cycle through $c_0, c_1, c_2, c_3$ in order, a contradiction. 

For $i=0,2$, let $\{R_0^i, R_1^i\}$ be a minimum $(c_i, c_{i+1}, C_i, V(A))$-separating pair. Define $$T'\coloneqq (T\cup \textrm{end}(R_0^0)\cup \textrm{end}(R_1^0) \cup \textrm{end}(R_0^2) \cup \textrm{end}(R_1^2))\setminus \{c_0, c_2\}.$$ 
By (ii) of the definition of separating pairs, for $i=0,2$ the vertex $c_i$ is an end of both $R_0^i$ and $R_1^i$. So $|T'|\leq 6$. Define 
$$X \coloneqq \textrm{int}(R_0^0)\cup \textrm{int}(R_1^0) \cup \textrm{int}(R_0^2) \cup \textrm{int}(R_1^2)\cup V(A) \cup \{c_0, c_2\}$$ and let $Y \coloneqq V(G)\setminus (X \cup T')$. 

Since $G$ is $7$-connected, $T'$ is not a cut separating $X$ and $Y$.
So there exists an edge $uv \in E(G)$ with $u \in X$ and $v \in Y$. From (ii) of the definition of a separating pair, we have that $u \notin V(A)$. So by symmetry between $C_0$ and $C_2$, we may assume that $u \in \textrm{int}(R_0^0)\cup \textrm{int}(R_1^0)\cup \{c_0\}$. Then by (iii) of the definition of a separating pair and Lemma \ref{anySkeleton}, we have that $v \in V(H)\setminus (V(A) \cup T)$. Recall that $G[V(C_0)]$ contains no cycle through $c_0$ and $c_1$ with fewer vertices than $C_0$. Then by (i) of Lemma \ref{pathExistence}, there exists an edge $aa' \in E(G)$ with $a \in V(C_0)$ and $a' \in V(A)$ so that $G[V(C_0)]$ contains a path $P$ with ends $a$ and $u$ which goes through the vertices $u,c_0, c_1, a$ in order.

Since $A$ and $H-V(A)$ are both connected, the graph $H$ has a $(\{z_2, a'\},\allowbreak \{z_3, v\})$-linkage $S_2, S_3$ so that $z_2$ is an end of $S_2$ and $z_3$ is an end of $S_3$. Then, fixing an arbitrary cyclic ordering of $C_2$, $$P \cup aa'\cup S_2 \cup z_2c_2 \cup C_2[c_2, c_3] \cup c_3z_3 \cup S_3 \cup vu$$is a cycle through $c_0, c_1, c_2, c_3$ in order, a contradiction.
\end{proof}

Now, let $\{R_0, R_1\}$ be a minimum $(c_0, c_1, C_0, V(A))$-separating pair in $G-(T\cup C_2)$. Next, we show another claim.

\begin{claim}
There is an edge $uv \in V(G)$ with $u \in \textrm{int}(R_0)\cup \textrm{int}(R_1)$ and $v \in V(H)\setminus (T \cup V(A))$.
\end{claim}
\begin{proof}
Define $W \coloneqq \{c_{i+2}: z_i\in V(A)\}$. So $W \subseteq \{c_2, c_3\}$ by Claim 6.1.1. Define $T' \coloneqq T \cup \textrm{end}(R_0)\cup \textrm{end}(R_1)\cup W$, 
$X \coloneqq \textrm{int}(R_0)\cup \textrm{int}(R_1) \cup V(A)$, and $Y \coloneqq V(G)\setminus (T' \cup X)$. 
Then $|T'|\leq 6$ since, for $i=0,1$, if $z_{i} \in V(A)$ then $c_i$ is an end of both $R_0$ and $R_1$. Furthermore, both $X$ and $Y$ are non-empty and $V(G)$ is the disjoint union of $X$, $Y$, and $T'$. Since $G$ is not $7$-connected, $T'$ is not a cut separating $X$ and $Y$. So there is an edge $uv \in E(G)$ with $u \in X$ and $v \in Y$.

If $v \in V(H)\setminus (T \cup V(A))$, then $u \in \textrm{int}(R_0)\cup \textrm{int}(R_1)$ and we are done. By the definition of a separating pair, $v \notin V(C_0)\setminus (V(R_0)\cup V(R_1))$. So $v \in V(C_2)\setminus W$, and by Lemma \ref{anySkeleton}, $u \in V(A)$. 

First suppose $V(A)\cap \{z_0, z_1, z_2, z_3\}=\emptyset$. Then since $H-V(A)$ is connected, it has (not necessarily disjoint) paths $Q_0$ with ends $z_0$ and $z_3$ and $Q_1$ with ends $z_1$ and $z_2$. 
Recall that we assumed  $(N(A) \cap V(C_0))\setminus \{c_0, c_1\}\ne \emptyset$. So the graph $G[V(A)\cup V(C_0)]$ contains (not necessarily disjoint) paths $S_0$ with ends $u$ and $c_0$ containing $c_1$, and $S_1$ with ends $u$ and $c_1$ containing $c_0$. Then $S_0 \cup c_0z_0 \cup Q_0$ is a path  in $G-C_2$ that goes through $u, c_1, c_0, z_3$ in order. Similarly $S_1 \cup c_1z_1 \cup Q_1$ is a path contained in $V(G)\setminus V(C_2)$ that goes through $u, c_0, c_1, z_2$ in order. Hence, $C_2 \cup S_0 \cup c_0z_0 \cup Q_0 \cup uv$ or  $C_2 \cup S_1 \cup c_1z_1 \cup Q_1 \cup uv$ contains a cycle through $c_0, c_1, c_2, c_3$ in order in $G$, a contradiction. (When 
$v\in \{c_2,c_3\}$, only one of these works.)

Now suppose that $V(A)\cap \{z_0, z_1, z_2, z_3\}=\{z_0\}$. Then $c_2\in W$ and $v\ne c_2$. Fix a cyclic ordering of $C_2$ so that $c_3\in V(C[c_2,v])$. Let $S_0$ be a path contained in $A$ with ends $u$ and $z_0$, and let $S_1$ be a path contained in $H-A$ with ends $z_1$ and $z_2$. Fix an arbitrary cyclic ordering of $C_0$. 
Then $$C_0[c_0, c_1]\cup c_1z_1 \cup S_1 \cup z_2c_2 \cup C[c_2, v]\cup vu \cup S_0\cup z_0c_0$$ 
is a cycle through $c_0, c_1, c_2, c_3$ in order, a contradiction.

Finally, let $V(A)\cap \{z_0, z_1, z_2, z_3\}=\{z_0, z_1\}$. Then $W=\{c_2,c_3\}$; so  $v \in V(C_2)\setminus \{c_2, c_3\}$. In this case it suffices to show that $H$ has either a $(\{z_0, u\}, \{z_1, z_2\})$-linkage or a $(\{z_0, z_3\}, \{z_1, u\})$-linkage, as such a linkage, $S$, and $uv$ give a cycle through $c_0,c_1,c_2,c_3$ in order, a contradiction. 
Fix a cyclic ordering of $Z$ so that the vertices $z_0, z_1, z_3, z_2$ occur on $Z$ in that order. Since $V(A)\cap \{z_0, z_1, z_2, z_3\}=\{z_0, z_1\}$, $T$ must contain exactly one vertex in $Z(z_1, z_3]$ and one vertex in $Z[z_2, z_0)$. Let $P$ be a shortest path in $A$ from $u$ to a vertex $x\in V(Z) \cap V(A)$, and let $x$ be the end of $P$ on $Z$. Then $x \in V(Z(z_2, z_3))$. If $x \in V(Z(z_2, z_0])$, then $P \cup Z[x, z_0], Z[z_1,z_2]$ form a $(\{z_0, u\}, \{z_1, z_2\})$-linkage. If $x \in V(Z(z_0, z_3))$, then $Z[z_3, z_0], P \cup Z[x,z_1]$ form a $(\{z_0, z_3\}, \{z_1, u\})$-linkage. This completes the proof of the claim.
\end{proof}

Now we show that there exists $t \in T$ so that $H-V(A)$ contains disjoint paths $Q_2,Q_3$ from $\{t,v\}$ to $z_2,z_3$, respectively. Otherwise, by Menger's Theorem, there is a separation $(X, Y)$ of $H-V(A)$ of order one or less so that $\{z_2, z_3\} \subseteq X$ and $\{v\}\cup T \subseteq Y$. Then $|X\setminus Y|\geq 1$ and $|(Y \cup V(A)) \setminus X| \geq |V(A)| \geq 1$. So $(X, Y \cup V(A))$ is a non-trivial separation of $H$ of order one or less, a contradiction since $H$ is $2$-connected.

By (i) of Lemma \ref{pathExistence}, for $i=0,1$ there is an edge $a_ia_i' \in E(G)$ so that $a_i \in V(C_0)$, $a_i' \in V(A)$, and $G[V(C_0)]$ contains a path $S_i$ with ends $u$ and $a_i$ going through $u, c_{1-i}, c_i, a_i$ in order. For $i=0,1$, let $Q_i$ be a path in $A$ with ends $a_i'$ and $t$.

Fix any cyclic ordering of $C_2$. If $v\in V(Q_2)$ and $t\in V(Q_3)$, then $$Q_2\cup z_2c_2 \cup C_2[c_2, c_3]\cup c_3z_3 \cup Q_3 \cup Q_0 \cup a_0'a_0 \cup S_0\cup uv$$is a cycle through $c_0, c_1, c_2, c_3$ in order, 
a contradiction. 
If $t\in V(Q_2)$ and $v\in V(Q_3)$, then  $$Q_3\cup z_3c_3 \cup C_2[c_2, c_3]\cup c_2z_2 \cup Q_2 \cup Q_1 \cup a_1'a_1 \cup S_1\cup uv$$ 
is a cycle through $c_0, c_1, c_2, c_3$ in order, a contradiction. 

\end{proof}

\section{Discharging and Proof of Theorem \ref{main}}
\label{final}
First we prove a discharging lemma on planar graphs, and then we complete the proof of the main theorem. 

\begin{lemma}
\label{discharging}
Let $H$ be a $3$-connected planar graph with some fixed planar drawing. Let $Z$ be the outer cycle of $H$ and let $x$ and $y$ be distinct vertices in $V(Z)$. Then either
\begin{enumerate}
\item there exists $v \in V(H)\setminus V(Z)$ with $d(v)\leq 6$, or
\item there exists $uv \in E(Z)$ so that $\{u,v\}\cap \{x,y\} = \emptyset$ and $d(u)+d(v)\leq 7$.
\end{enumerate}
\end{lemma}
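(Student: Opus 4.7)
The plan is to proceed by contradiction via a short double-counting argument based on Euler's formula. I will assume both (i) and (ii) fail, so every $v \in V(H)\setminus V(Z)$ satisfies $d(v) \geq 7$ and every edge $uv \in E(Z)$ with $\{u,v\}\cap\{x,y\} = \emptyset$ satisfies $d(u)+d(v) \geq 8$. Assign each vertex $v$ the charge $\mu(v) = d(v) - 6$ and each face $f$ the charge $\mu(f) = 2\ell(f) - 6$, where $\ell(f)$ is the length of the facial walk of $f$. Since $\sum_v d(v) = 2|E(H)| = \sum_f \ell(f)$ and $|V(H)| - |E(H)| + |F(H)| = 2$, the total charge works out to
\[ \sum_v \mu(v) + \sum_f \mu(f) = 6|E(H)| - 6(|V(H)| + |F(H)|) = -12. \]

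Set $n_2 = |V(H)\setminus V(Z)|$, $k = |V(Z)|$, and $S = V(Z)\setminus\{x,y\}$. The failure of (i) gives $\sum_{v \in V(H)\setminus V(Z)} \mu(v) \geq n_2$; every non-outer face contributes $\mu \geq 0$; the outer face contributes exactly $2k - 6$; and the minimum-degree bound from $3$-connectivity gives $\mu(x) + \mu(y) \geq -6$. Plugging these into the total-charge identity and rearranging yields the upper bound
\[ \sum_{v \in S} d(v) \;\leq\; 4k - 12 - n_2. \qquad (\star) \]

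Next I derive a matching lower bound on $\sum_{v \in S} d(v)$ from condition (ii). Let $\alpha$ be the number of $Z$-edges incident to $\{x,y\}$, so $\alpha = 3$ if $xy \in E(Z)$ and $\alpha = 4$ otherwise. Let $E_S$ be the set of edges of $Z$ with both endpoints in $S$ (so $|E_S| = k - \alpha$), let $S'' \subseteq S$ be the set of vertices with a $Z$-neighbor in $\{x, y\}$, and for $v \in S$ write $d_{E_S}(v)$ for the number of edges of $E_S$ incident to $v$. Sorting $\sum_{uv \in E_S}(d(u) + d(v))$ by vertex, and using the fact that $d_{E_S}(v) = 2$ for $v \in S\setminus S''$, gives
\[ 2\sum_{v \in S} d(v) \;=\; \sum_{uv \in E_S}\bigl(d(u)+d(v)\bigr) + \sum_{v \in S''}\bigl(2 - d_{E_S}(v)\bigr)\, d(v). \]
Assumption (ii) bounds the first sum below by $8(k - \alpha)$; a short case check on the position of $x, y$ on $Z$ establishes the identity $\sum_{v \in S''}(2 - d_{E_S}(v)) = 2(\alpha - 2)$; and $d(v) \geq 3$ then bounds the second sum below by $6(\alpha - 2)$. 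Hence $\sum_{v \in S} d(v) \geq 4k - \alpha - 6$, which combined with $(\star)$ forces $n_2 \leq \alpha - 6 \leq -2 < 0$, contradicting $n_2 \geq 0$.

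The one step requiring real care is the identity $\sum_{v \in S''}(2 - d_{E_S}(v)) = 2(\alpha - 2)$, which encodes the ``boundary effects'' at $x$ and $y$. There are three subcases depending on the position of $x$ and $y$ on $Z$: adjacent (giving $\alpha = 3$, $|S''| = 2$, each with $d_{E_S} = 1$); at distance exactly $2$ (giving $\alpha = 4$, $|S''| = 3$, with the common $Z$-neighbor having $d_{E_S} = 0$ and the other two having $d_{E_S} = 1$); and at distance at least $3$ (giving $\alpha = 4$, $|S''| = 4$, each with $d_{E_S} = 1$). Each is a direct count, and the degenerate small values $k \in \{3,4\}$ fit into the same framework without separate treatment.
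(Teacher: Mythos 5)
Your proof is correct. The total-charge computation ($\sum_v(d(v)-6) + \sum_f(2\ell(f)-6) = -12$), the derivation of $(\star)$, the edge-sum identity $2\sum_{v\in S}d(v) = \sum_{uv\in E_S}(d(u)+d(v)) + \sum_{v\in S''}(2-d_{E_S}(v))d(v)$, the boundary count $\sum_{v\in S''}(2-d_{E_S}(v)) = 2(\alpha-2)$ (all three subcases check, including $k=3,4$), and the final contradiction $n_2 \le \alpha - 6 \le -2 < 0$ all go through.

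Your route differs in execution from the paper's, though both rest on Euler's formula. The paper assigns asymmetric initial charges ($d(v)-4$ on $Z$, $d(v)-7$ off $Z$), derives $\sum_{v\neq x,y}\mathrm{ch}_0(v) < -4$, and then performs an explicit local discharging step along $Z$ (degree-$\geq 6$ vertices send $1$, degree-$5$ vertices send $1/2$ to their $Z$-neighbors), reading off the conclusion from a vertex whose charge stays negative. You instead take the balanced vertex-face charge $\mu(v)=d(v)-6$, $\mu(f)=2\ell(f)-6$ with total $-12$, skip the discharging rules entirely, and close the argument with a global double count of $\sum_{uv\in E_S}(d(u)+d(v))$. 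The trade-off: the paper's discharging moves are a routine small case check on one vertex's neighborhood, whereas your boundary-effect identity requires a small case check on the relative position of $x$ and $y$ on $Z$; your version avoids choosing discharging weights and produces a single clean inequality $n_2 \le \alpha - 6$, which is arguably tighter and more transparent than ``some vertex still has negative charge.'' Both are valid and of comparable length.
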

\begin{proof}
Let $\mathcal{F}$ denote the set of all facial cycles of $G$. Then $2|E(H)|=\sum_{f \in \mathcal{F}}|V(f)|\geq |V(Z)|+3(|\mathcal{F}|-1)$. By Euler's Formula, we have $12-6|V(H)|+6|E(H)|=6|\mathcal{F}|\leq 4|E(H)|+6-2|V(Z)|$. Thus, 
\begin{align*}
2|E(H)|\leq 4|V(H)|+2|V(H)\setminus V(Z)|-6.
\end{align*}
Now for every vertex $v \in V(Z)$, define $ch_0(v) \coloneqq d(v)-4$ and for every $v \in V(H)\setminus V(Z)$, define $ch_0(v) \coloneqq d(v)-7$. Then by the last inequality and since $x$ and $y$ have degrees at least three,
\begin{align*}
\sum_{v \in V(H)\setminus \{x,y\}}ch_0(v)&\leq 2+2|E(H)|-4|V(H)|-3|V(H)\setminus V(Z)|\\&\leq -|V(H)\setminus V(Z)|-4.
\end{align*}
Now, for all distinct vertices $v,u \in V(Z)\setminus \{x,y\}$ so that $uv \in E(Z)$ and $d(u) \geq 6$, give one unit of charge from $u$ to $v$. For all distinct vertices $v,u \in V(Z)\setminus \{x,y\}$ so that $uv \in E(Z)$ and $d(u) = 5$, give $1/2$ unit of charge from $u$ to $v$. Denote the charge function obtained from $ch_0$ this way by $ch$. Observe that $V(H)\setminus V(Z)\ne \emptyset$ as $H$ is $3$-connected, and hence

\begin{align*}
\sum_{v \in V(H)\setminus \{x,y\}}ch(v)=\sum_{v \in V(H)\setminus \{x,y\}}ch_0(v)< -4.
\end{align*}
Suppose that conclusion (i) does not hold. Then for every vertex $v \in V(H)\setminus V(Z)$, $ch(v)=ch_0(v)=d(v)-7 \geq 0$. For every vertex $v \in V(Z)\setminus \{x,y\}$ with $d(v)\geq 6$, $ch(v)\geq ch_0(v)-2 = d(v)-6 \geq 0$. Likewise every vertex $v \in V(Z)\setminus \{x,y\}$ of degree four or five has $ch(v)\geq 0$. If $v \in V(Z)\setminus \{x,y\}$ with degree less than four, then $d(v)=3$ and $ch(v)\geq -1$.

Therefore,  since $\sum_{v \in V(H)\setminus \{x,y\}}ch(v) <-4$, there exists a vertex $v \in V(Z)\setminus \{x,y\}$ with negative charge so that neither of its neighbors on $Z$ are $x$ or $y$. Let $u$ and $u'$ be the neighbors of $v$ on $Z$. Then since $ch(v)<0$, $v$ has degree three and  $u$ or $u'$ has degree no more than four. Thus case (ii) of the lemma holds.
\end{proof}

Now we are ready to complete the proof of Theorem \ref{main}.

\begin{proof} 

Let $G$ be a $7$-connected graph with distinct vertices $c_0, c_1, c_2, c_3 \subseteq V(G)$ and, going for a contradiction, suppose that $G$ has no cycle through $c_0, c_1, c_2, c_3$ in order. Let $S = (C_0, C_2, Z, P_0, P_1, P_2, P_3)$ be a $(c_0, c_1, c_2, c_3)$-skeleton as in Proposition \ref{propSkeleton3Conn}. For every $i \in \{0,1,2,3\}$, let $z_i$ be the end of $P_i$ on $Z$. Define $H \coloneqq G-(V(C_0)\cup V(C_2))$. First we prove a claim.

\begin{claim}
\label{planarDrawing}
There is a plane drawing of the graph $(H, z_0, z_1, z_3, z_2)$ with outer cycle $Z'$ such that $N(C_0\cup C_2)\subseteq V(Z')$.
\end{claim}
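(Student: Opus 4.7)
The plan is to apply Theorem~\ref{2link} to $H$, promote the resulting 3-planar structure to a minimal one via Lemma~\ref{minimal}, and then use Lemma~\ref{3connPlanar} together with the $7$-connectivity of $G$ to collapse the structure to an actual planar drawing of $H$ whose outer cycle contains every neighbor of $V(C_0) \cup V(C_2)$.

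First, $H$ contains no $(\{z_0, z_3\}, \{z_1, z_2\})$-linkage: combining such a linkage with the length-$1$ paths $P_0, P_1, P_2, P_3$ provided by Proposition~\ref{propSkeleton3Conn}(ii) and appropriately oriented subpaths of $C_0, C_2$ immediately produces a cycle in $G$ through $c_0, c_1, c_2, c_3$ in order, contradicting the hypothesis. Hence by Theorem~\ref{2link}, $(H, z_0, z_1, z_3, z_2)$ is 3-planar, and by Lemma~\ref{minimal} there is a minimal family $\A$ of pairwise disjoint subsets of $V(H)\setminus \{z_0,z_1,z_2,z_3\}$ such that $(H, \A, z_0, z_1, z_3, z_2)$ is 3-planar. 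Fix a plane drawing of $p(H, \A)$ and let $F$ denote the vertex set of its outer face; then $z_0, z_1, z_3, z_2$ appear on $F$ in this cyclic order, so $|F|\ge 4$.

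The core step is the subclaim that no vertex $w \in V(H)\setminus F$ is $G$-adjacent to any vertex of $V(C_0) \cup V(C_2)$. Suppose for contradiction that $wv \in E(G)$ with $w \in V(H)\setminus F$ and, by symmetry between $C_0$ and $C_2$, $v \in V(C_0)$. I apply Lemma~\ref{3connPlanar} to $(H, \A)$ with $t_2 := w$ and three endpoints $s_1, t_1, s_2$ chosen from $\{z_0, z_1, z_2, z_3\} \subseteq F$, and then glue the resulting disjoint paths with $wv$, the $P_i$'s, and a subpath of $C_0$ into a cycle through $c_0, c_1, c_2, c_3$ in that order. If $v = c_1$, request a $(\{w, z_2\}, \{z_3, z_0\})$-linkage and have the cycle enter $H$ via the edge $c_1 w$ in place of $c_1 z_1$. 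Otherwise orient $C_0$ so that $v \notin V(C_0(c_0, c_1))$, request a $(\{z_1, z_2\}, \{z_3, w\})$-linkage $\{Q_1, Q_3\}$, and traverse $c_0 \to C_0[c_0, c_1] \to c_1 \to z_1 \to Q_1 \to z_2 \to c_2 \to C_2[c_2, c_3] \to c_3 \to z_3 \to Q_3 \to w \to v \to C_0[v, c_0] \to c_0$, with the understanding that the trailing segment $v \to C_0[v, c_0]$ collapses to the single vertex $c_0$ when $v = c_0$. In every subcase Lemma~\ref{3connPlanar} applies (its three outer endpoints lie in $F$ and $w \in V(H)\setminus F$) and the resulting cycle contradicts the hypothesis; the case $v \in V(C_2)$ is symmetric.

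Finally, to deduce $\A = \emptyset$, suppose some $A \in \A$ existed. The inclusion $\{z_0, z_1, z_2, z_3\} \subseteq V(H)\setminus A$ shows $V(H)\setminus(A \cup N_H(A))$ is non-empty, so $N_H(A)$ is a cut in the $3$-connected graph $H$ of size at most $3$, hence exactly $3$. Since $G$ is $7$-connected and $N_G(A) \subseteq N_H(A) \cup V(C_0) \cup V(C_2)$, this forces $|N_G(A) \cap (V(C_0) \cup V(C_2))| \ge 4$, so some $w \in A$ is $G$-adjacent to a vertex of $V(C_0) \cup V(C_2)$; but $A \cap V(p(H, \A)) = \emptyset$ gives $w \in V(H)\setminus F$, contradicting the subclaim. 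Hence $\A = \emptyset$, so $H = p(H, \A)$ is planar; since $H$ is $3$-connected its outer-face boundary $Z'$ is a cycle through $z_0, z_1, z_3, z_2$ in this cyclic order, and the subclaim applied with $F = V(Z')$ yields $N(V(C_0) \cup V(C_2)) \cap V(H) \subseteq V(Z')$. The main obstacle is the case analysis in the subclaim: choosing, for each location of $v$ on $C_0$ (or $C_2$), the correct pair of linkage endpoints so that the paths from Lemma~\ref{3connPlanar} glue with $wv$, the $P_i$'s, and the right pieces of $C_0, C_2$ into a cycle ordered as $c_0, c_1, c_2, c_3$.
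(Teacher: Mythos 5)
Your proposal is correct and follows essentially the same route as the paper: apply Theorem~\ref{2link} and Lemma~\ref{minimal} to get a minimal $3$-planar structure, use Lemma~\ref{3connPlanar} to build a forbidden ordered cycle if any interior vertex were adjacent to $C_0\cup C_2$, and then use $7$-connectivity together with $|N_H(A)|=3$ to force $\A=\emptyset$. The only cosmetic difference is that the paper disposes of the case $v=c_1$ by invoking the symmetry swapping $c_0\leftrightarrow c_1$ (and correspondingly $c_2\leftrightarrow c_3$, $z_0\leftrightarrow z_1$, $z_2\leftrightarrow z_3$), whereas you handle that case directly by requesting a $(\{w,z_2\},\{z_3,z_0\})$-linkage instead; both are valid.
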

\begin{proof}
First of all, since $G$ has no cycle through $c_0, c_1, c_2, c_3$ in order, the graph $H$ has no $(\{z_0, z_3\}, \{z_1, z_2\})$-linkage. So by Theorem \ref{2link}, we have that $(H, z_0, z_1, z_3, z_2)$ is $3$-planar. Then by Lemma \ref{minimal}, there is a collection $\A$ of pairwise disjoint subsets of $V(H)\setminus \{z_0,z_1,z_2,z_3\}$ so that $(H, \A, z_0, z_1, z_3, z_2)$ is 3-planar and $\A$ is minimal. Fix a plane drawing of $p(H, \A)$, and let $F$ be the set of vertices on its outer face (of $p(H, \A)$).

We claim that $N(C_0\cup C_2)\subseteq F$. If this is true, then since $G$ is $7$-connected and for every $A \in \A$, $|N_H(A)|=3$, we will have $\A = \emptyset$ and be done. So suppose that there exists 
$u \in N(C_0\cup C_2)\setminus F$. By symmetry between $C_0$ and $C_2$, we may assume that $u \in N(C_0)$. By symmetry between $c_0$ and $c_1$ on $C_0$, we may assume that there is a vertex $v \in V(C_0)\setminus \{c_1\}$ so that $vu \in E(G)$.

By Lemma \ref{3connPlanar} applied to $(H, \A, z_0, z_1, z_3, z_2)$, the graph $H$ has a $(\{z_1,z_2\},\allowbreak \{z_3, u\})$-linkage $R_1, R_3$, where $R_1$ has ends $z_1$ and $z_2$ and $R_3$ has ends $z_3$ and $u$. Fix any cyclic order of $C_2$, and a cyclic order of $C_0$ so that $v \notin \textrm{int}(C_0[c_0, c_1])$. Then $$C_0[c_0, c_1]\cup c_1z_1 \cup R_1 \cup z_2c_2 \cup C_2[c_2, c_3] \cup c_3z_3 \cup R_3 \cup uv \cup C_0[v, c_0]$$is a cycle through $c_0, c_1, c_2, c_3$ in order, a contradiction.
\end{proof}

\smallskip

Fix a plane drawing of $(H,z_0,z_1,z_3,z_2)$ as in the claim, and fix a cyclic ordering of $Z'$ so that $z_0, z_1, z_3, z_2$ occur on $Z'$ in that order. We have one more claim:

\begin{claim}
\label{betterPlanar}
There exist vertices $x_0,x_1 \in V(Z')$ so that $N(C_0)\cap V(H) \subseteq Z'[x_0,x_1]$ and $N(C_2)\cap V(H)\subseteq Z'[x_1,x_0]$. (So $x_0,z_0,z_1,x_1,z_3,z_2$ occur on $Z'$ in order.)
\end{claim}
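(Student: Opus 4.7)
I would argue by contradiction. Suppose no such pair $(x_0, x_1)$ exists. The non-existence of $x_0$ with the two claimed containments on $Z'[z_2, z_0]$ means there are vertices $a \in N(C_0) \cap V(H) \cap V(Z'[z_2, z_0])$ and $b \in N(C_2) \cap V(H) \cap V(Z'[z_2, z_0])$ with $a$ strictly preceding $b$ going clockwise from $z_2$ to $z_0$. The analogous failure on $Z'[z_1, z_3]$ is handled by the symmetric argument (reversing orientation of $Z'$ and of $C_0, C_2$), so I may assume the failure is on $Z'[z_2, z_0]$. Fix $a' \in V(C_0) \cap N(a)$ and $b' \in V(C_2) \cap N(b)$.

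In the plane drawing of $H$ from Claim \ref{planarDrawing}, the outer cycle $Z'$ has cyclic order $z_0, z_1, z_3, z_2, \ldots, a, \ldots, b, \ldots, z_0$. Hence the two subpaths $R_1 := Z'[z_1, b]$ (through $z_0$) and $R_2 := Z'[z_3, a]$ (through $z_2$) are internally vertex-disjoint subpaths of $Z' \subseteq H$, providing a $(\{z_1, b\}, \{z_3, a\})$-linkage in $H$ realized by arcs of $Z'$. In the main case where $a' \neq c_1$ and $b' \neq c_3$, I orient $C_0$ so that $a' \in V(C_0[c_1, c_0])$ (permitting $a' = c_0$, which makes $C_0[a', c_0]$ trivial) and $C_2$ so that $b' \in V(C_2[c_3, c_2])$ (permitting $b' = c_2$). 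Then
\begin{align*}
Q := {} & C_0[c_0, c_1] \cup c_1 z_1 \cup R_1 \cup b b' \cup C_2[b', c_2]\\
&{} \cup C_2[c_2, c_3] \cup c_3 z_3 \cup R_2 \cup a a' \cup C_0[a', c_0]
\end{align*}
is a cycle in $G$ through $c_0, c_1, c_2, c_3$ in order, contradicting the hypothesis on $G$.

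The remaining subcases $a' = c_1$ or $b' = c_3$ (or both) are handled analogously by using $a$ in place of $z_1$ (respectively $b$ in place of $z_3$) as the edge leaving $c_1$ (respectively $c_3$) into $H$, together with a correspondingly shifted pair of disjoint $Z'$-arcs. For example, when $a' = c_1$ and $b' = c_3$, the cycle
\[
C_0[c_0, c_1] \cup c_1 a \cup Z'[a, z_2] \cup z_2 c_2 \cup C_2[c_2, c_3] \cup c_3 b \cup Z'[b, z_0] \cup z_0 c_0
\]
works, since $Z'[a, z_2]$ and $Z'[b, z_0]$ are vertex-disjoint subpaths of $Z'$. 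The main technical obstacle is organizing this case analysis on the locations of $a'$ and $b'$ to ensure no vertex is revisited; in each subcase this is immediate from the disjointness of the two $Z'$-arcs in the chosen linkage and the careful orientation of the arcs of $C_0$ and $C_2$.
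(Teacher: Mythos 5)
Your constructions for the two ``interleaving'' cases are fine, but the opening reduction of the negation is incomplete, and this is a genuine gap. The failure of the claim does \emph{not} imply an interleaving on $Z'[z_2,z_0]$ or (symmetrically) on $Z'[z_1,z_3]$. Since $z_0,z_1\in N(C_0)$ and $z_2,z_3\in N(C_2)$, any admissible pair must have $x_0\in Z'[z_2,z_0]$ and $x_1\in Z'[z_1,z_3]$, so the arc $Z'[x_0,x_1]$ never meets the open arc $Z'(z_3,z_2)$ and the arc $Z'[x_1,x_0]$ never meets $Z'(z_0,z_1)$. Hence the claim also fails if some vertex of $N(C_0)\cap V(H)$ lies in $Z'(z_3,z_2)$, or some vertex of $N(C_2)\cap V(H)$ lies in $Z'(z_0,z_1)$ --- and this can happen with no bad pair $(a,b)$ on either of your two arcs (e.g.\ all $C_2$-neighbours in $Z'[z_3,z_2]$, all $C_0$-neighbours in $Z'[z_0,z_1]$ except one vertex strictly between $z_3$ and $z_2$). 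Nothing established before this claim excludes such neighbours; indeed the paper spends most of its proof ruling them out: it first disposes of $N(c_0)\cap Z'(z_1,z_2)\neq\emptyset$ and $N(c_1)\cap Z'(z_3,z_0)\neq\emptyset$ by a direct cycle, then takes $x_0,x_1$ to be extreme neighbours of $c_0,c_1$ on $Z'$ and kills any neighbour of $C_0$ in $Z'(x_1,x_0)$ and of $C_2$ in $Z'(x_0,x_1)$ via linkages in $H$. Your cycle constructions could very likely be adapted to these missing configurations (pairing such a vertex with $z_2$, $z_0$, etc., exactly as in your main case), but as written the proof asserts an exhaustive case split that is not exhaustive, so it does not establish the claim.

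Two smaller points: your notation $Z'[a,z_2]$ and $Z'[z_1,b]$ conflicts with the paper's convention (the clockwise arc $Z'[a,z_2]$ passes through $b$ and $z_0$); you clearly intend the complementary arcs, but this should be stated in the paper's notation. Also, the symmetry you invoke for the failure on $Z'[z_1,z_3]$ is the relabelling $c_0\leftrightarrow c_1$, $c_2\leftrightarrow c_3$ (hence $z_0\leftrightarrow z_1$, $z_2\leftrightarrow z_3$), under which ``through $c_0,c_1,c_2,c_3$ in order'' is preserved by reading the cycle backwards; ``reversing the orientation of $Z'$ and of $C_0,C_2$'' alone does not describe this.
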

\begin{proof}
We may assume that $N(c_0) \cap Z'(z_1,z_2) = \emptyset$ and $N(c_1) \cap Z'(z_3,z_0) = \emptyset$; for, otherwise, $G[V(C_0\cup C_2\cup Z')]$ contains a cycle through $c_0,c_1,c_2,c_3$ in order.
Let $x_0$ be the neighbor of $c_0$ on $Z'$ so that $Z'[z_2, x_0]$ is shortest possible. Let $x_1$ be the neighbor of $c_1$ on $Z'$ so that $Z'[x_1, z_3]$ is shortest possible. 
We now show that $x_0,x_1$ satisfy Claim ~\ref{betterPlanar}. 

First suppose that there is a vertex $u \in Z'(x_1, x_0)$ with a neighbor $v \in V(C_0)$. By symmetry between $c_0$ and $c_1$, we may assume that $v\ne c_1$. If $u \in Z'(x_1, z_2)$, the graph $H$ has a $(\{z_1, z_2\}, \{u, z_3\})$-linkage and thus a cycle through $c_0, c_1, c_2, c_3$ in order, a contradiction. So we may assume that $u \in Z'[z_2, x_0)$. By the choice of $x_0$, $v\ne c_0$. 
Then $H$ has a $(\{z_0, z_3\}, \{u, z_2\})$-linkage, and thus a cycle through $c_0, c_1, c_2, c_3$ in order.

Now suppose that there is a vertex $u \in Z'(x_0, x_1)$ with a neighbor $v \in V(C_2)$. By symmetry between $c_2$ and $c_3$ we may assume that $v \ne c_3$. Then $H$ contains a $(\{x_0, z_3\}, \{u, z_1\})$-linkage, and thus $G$ contains a cycle through $c_0, c_1, c_2, c_3$ in order, a contradiction.
\end{proof}

Since $G$ is $7$-connected and by Claim \ref{betterPlanar}, every vertex in $V(H)\setminus V(Z')$ has degree at least seven in $H$. Thus by Lemma \ref{discharging}, there exists $uv \in E(Z')$ 
so that $\{u,v\}\cap \{x_0, x_1\}=\emptyset$ and $d_H(u)+d_H(v)\leq 7$. By symmetry between the cycles $C_0$ and $C_2$, we may assume that both $u$ and $v$ are vertices in $Z'(x_0, x_1)$, and thus only have neighbors in $V(H) \cup V(C_0)$. 

Let $\{R_0, R_1\}$ be a minimum $(c_0, c_1, C_0, \{u,v\})$-separating pair. Define $A \coloneqq \textrm{int}(R_0)\cup \textrm{int}(R_1)$ and $T \coloneqq \textrm{end}(R_0)\cup \textrm{end}(R_1) \cup \{u,v\}$. 

We may assume that $A\ne \emptyset$. Suppose otherwise, since $d_{C_0}(u)+d_{C_0}(v) \geq 7$ and $|V(C_0)\cap T|\leq 4$, there exists $i \in \{0,1\}$ so that both ends of $R_i$ are adjacent to both $u$ and $v$. 
Since $H$ either contains a $(\{u, z_3\}, \{v, z_2\})$-linkage or a $(\{v, z_3\}, \{u, z_2\})$-linkage, the graph $G$ contains a cycle through $c_0, c_1, c_2, c_3$ in order, a contradiction.

Now since $G$ is $7$-connected and $|T| \leq 6$, there exists an edge $xy \in E(G)$ so that $x \in A$ and $y \in V(G)\setminus (A \cup T)$. Since $\{R_0, R_1\}$ is a separating pair, $y \notin V(C_0)$. By Lemma \ref{anySkeleton} and since $x \in V(C_0)\setminus \{c_0, c_1\}$, we have $y \notin V(C_2)$. So $y \in V(H)\setminus \{u,v\}$.

Since $H$ is $3$-connected, $H-v$ is 2-connected and, hence, has two disjoint paths from $\{u, y\}$ to $\{z_2, z_3\}$. By (i) of Proposition \ref{propSkeleton3Conn} and (i) of Lemma \ref{pathExistence}, and since $uv \in E(G)$, for every $i \in \{0,1\}$ the graph $G[V(C_0)\cup \{u,v\}]$ has a path through $u, c_i, c_{1-i}, x$ in order. Thus $G$ has a cycle through $c_0, c_1, c_2, c_3$ in order. 
\end{proof}

\section{Concluding Remarks}
\label{conclude}
Recall that $f(k)$ is the minimum connectivity for a graph to be $k$-ordered, and $g(k)$ is the minimum connectivity for a graph to be $k$-linked. 
Kostochka and G. Yu \cite{KostochkaYu} asked the following.

\begin{problem}
Is it true that $f(k) < g(k)$ for all $k \geq 2$?
\end{problem}
It is not hard to show that $f(2)=2$ and $f(3) = 3$ as there is only one cyclic ordering of three or fewer vertices. Jung showed that $g(2)=6$ \cite{Jung}. It follows that $f(2)<g(2)$ and $f(3)<g(2) \leq g(3)$. As observed in the literature, the graph obtained from the complete graph on $3k-1$ vertices by removing a matching of size $k$ is not $k$-linked \cite{3Linked}. Thus as a corollary of our main Theorem \ref{main}, we have the next case that $f(4) = 7 < 10 \leq g(4)$. 

We also ask for a structural characterization when a graph $G$ with four fixed vertices $v_1, v_2, v_3, v_4$ has no cycle through $v_1, v_2, v_3, v_4$ in order. By this we mean something similar to the Two Paths Theorem 
 \cite{seymour} and \cite{Thomassen}, which we rely on in this paper and introduced in Section 3. The theorem of X. Yu  \cite{Yu-Disjoint1, Yu-Disjoint2, Yu-Disjoint3} characterizing when a graph has a path through four given vertices in a specific order
also motivates our approach. We hope that some of the work in this paper can be used towards finding such a characterization. Much of the structure we expect to see appears in our proof  (see Claim \ref{planarDrawing}).

The techniques we use to prove Proposition \ref{propSkeleton} build upon work on the following:

\begin{LPR}
\label{LPR} \cite{Lovasz}
For every positive integer $k$, there is an integer $h(k)$ so that for every $h(k)$-connected graph $G$ and all vertices $s$ and $t$ in $G$, there is an induced path $P$ with ends $s$ and $t$ so that the graph $G-V(P)$ is $k$-connected.
\end{LPR}

Kawarabayashi and Ozeki \cite{KO} made the following related conjecture.  

\begin{conjecture}\cite{KO}
\label{KO-conjecture} There exists a function $f(k,l)$ such that the following holds. For every $f(k,l)$-connected graph $G$ and two distinct vertices $s$ and $t$ in $G$, there are $k$ internally disjoint paths $P_1,\ldots, P_k$ with endpoints $s$ and $t$ such that $G-\bigcup_{i=1}^k V(P_i)$ is $l$-connected.
\end{conjecture}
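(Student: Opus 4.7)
The plan is to attack Conjecture \ref{KO-conjecture} by induction on $k$, with the base case $k=1$ reducing to the Lov\'{a}sz Path Removal Conjecture. Indeed, if $G$ is $h(l)$-connected then LPR yields an induced $s$-$t$ path $P$ with $G-V(P)$ being $l$-connected, and the singleton $\{P\}$ is trivially a family of one internally disjoint $s$-$t$ path, so $f(1,l)\le h(l)$ conditional on LPR. The realistic target of any attack on the full conjecture is therefore a conditional reduction ``LPR $\Rightarrow$ Conjecture \ref{KO-conjecture}'', together with unconditional statements for small $l$ where LPR is known.

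For the inductive step, assume the conjecture for $k-1$ and let $G$ be $f(k,l)$-connected with distinguished $s,t$. I would choose a collection $\mathcal{P}=\{P_1,\ldots,P_k\}$ of $k$ internally disjoint $s$-$t$ paths minimizing $\sum_{i=1}^{k}|V(P_i)|$, which exists by Menger's theorem once $f(k,l)\ge k$. Assume for contradiction that $G-\bigcup_{i=1}^{k}V(P_i)$ admits a cut $T$ of size $<l$ with sides $X,Y$. Because $G$ is far more connected than $|T|+k$, the same kind of degree-counting argument used repeatedly in Sections \ref{2Conn}--\ref{3Conn} of the present paper must produce an edge (or short path) from $X\cup T$ to $Y\cup T$ passing through some $P_j$. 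A careful exchange along $P_j$ --- replacing a segment of $P_j$ by this detour and releasing the displaced internal vertices into $X$ or $Y$ --- should strictly decrease the extremal parameter, yielding the desired contradiction. This mirrors the separating-pair machinery developed in Section \ref{separatingPairs}, where small cuts of a subgraph are promoted to cuts of the whole graph by attaching endpoints of nearby paths, and the resulting quantitative bound should take the shape $f(k,l)\le h(l)+(k-1)\cdot c(l)$ for some function $c$.

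The main obstacle is two-fold. First, LPR itself is open in general (known for $l\le 3$ and various partial cases), so unconditional results are out of reach for large $l$; any clean theorem will have to be conditional on LPR. Second, and more subtle, the rerouting lemma sketched above must simultaneously (i) strictly decrease the extremal parameter and (ii) arrange for the new family $\mathcal{P}'$ to leave an $l$-connected remainder when removed --- a coupling between the chosen rerouting and the connectivity of the residue that is precisely the difficulty at the heart of LPR. I expect an unconditional proof of the $l=2$ case to be tractable using only Menger's theorem and a variant of Lemma \ref{mainLemma} adapted to the $k$-path setting, since $2$-connectedness of the remainder is controlled by a single cut vertex which the extremal minimization can eliminate directly; the general case $l\ge 3$ will almost certainly require a new structural idea beyond the reach of the techniques in this paper alone.
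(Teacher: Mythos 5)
This statement is an open conjecture of Kawarabayashi and Ozeki; the paper does not prove it and neither do you. Your proposal is a research sketch, not a proof, and you acknowledge as much: the heart of your inductive step --- the rerouting along $P_j$ that must simultaneously decrease the extremal parameter and restore $l$-connectedness of the residue --- is exactly the open difficulty, and minimizing $\sum_i |V(P_i)|$ gives you no handle on cuts of $G-\bigcup_i V(P_i)$ (a shortest family of paths can happily leave behind a remainder with tiny cuts, and the ``detour'' edge guaranteed by high connectivity need not be usable without destroying internal disjointness or lengthening the family in a way your extremal choice forbids). So as a proof of the conjecture the proposal has a fatal gap, and no amount of separating-pair machinery from Sections \ref{separatingPairs}--\ref{3Conn} is known to close it; those arguments raise the connectivity of a residue only to $2$ or $3$, in a very specific skeleton setting, which is consistent with the known unconditional bounds $f(k,1)\le 2k+1$ and $f(k,2)\le 3k+1$ but far from general $l$.

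Even read charitably as a conditional reduction ``LPR $\Rightarrow$ Conjecture \ref{KO-conjecture},'' your route is more complicated and less complete than the one the paper records. The paper's observation avoids induction on $k$ and any rerouting entirely: duplicate $s$ and $t$ (each copy with the same neighbourhood), then repeatedly apply LPR to extract an induced path between a fresh copy of $s$ and a fresh copy of $t$, internally disjoint from all remaining copies; after $k$ rounds the extracted paths project to $k$ internally disjoint $s$--$t$ paths in $G$ and the leftover graph is $l$-connected by the LPR guarantee at the last step. Your base case coincides with LPR, but your inductive step tries to re-derive the connectivity of the residue by hand, which is precisely what the duplication trick sidesteps. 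If you want something provable today, the realistic targets are the small-$l$ cases already in the literature, not a new induction on $k$.
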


The above conjecture is implied by the Lov\'{a}sz Path Removal Conjecture. This can be seen by making copies of $s$ and $t$ (where all copies of $s$ have the same neighborhood as $s$, and likewise for $t$) and repeatedly finding an induced path $P$ between a copy of $s$ and a copy of $t$ that is internally disjoint from all copies of $s$ and $t$. Kawarabayashi and Ozeki \cite{KO} proved that $f(k,1)\le 2k+1$ and $f(k,2)\le 3k+1$. Furthermore, J. Ma proved that with more connectivity a stronger conclusion holds \cite{JieMa}. Our proof of Proposition \ref{propSkeleton} uses similar techniques to \cite{KO} and \cite{JieMa} for the case of $f(2,2)$.

\section*{Acknowledgments}
The authors would like to thank Runrun Liu and the anonymous referees for helpful comments throughout, especially in corrections to Section~\ref{separatingPairs}.

\let\OLDthebibliography\thebibliography
\renewcommand\thebibliography[1]{
  \OLDthebibliography{#1}
  \setlength{\parskip}{0em}
  \setlength{\itemsep}{0pt plus 0.3ex}
}


\begin{thebibliography}{00}
\footnotesize
\def\JCTB{{\it J.~Combin.\ Theory Ser.\ B }}
\def\CMUC{{\it Comment. Math. Univ. Carol. }}
\def\TAMS{{\it Trans.\ Amer.\ Math.\ Soc. }}
\def\JAMS{{\it J.~Amer.\ Math.\ Soc. }}
\def\PAMS{{\it Proc. Amer. Math. Soc. }}
\def\DM{{\it Discrete Math. }}
\def\CM{{\it Contemporary Math. }}
\def\GC{{\it Graphs and Combin. }}
\def\COM{{\it Combinatorica }}
\def\JGT{{\it J.~Graph Theory }}
\def\JAlgorithms{{\it J.~Algorithms }}
\def\SIAMDM{{\it SIAM J.~Disc.\ Math. }}
\def\CPC{{\it Combinatorics, Probability and Computing }}
\def\EJC{Electron.\ J.~Combin. }
\def\EuropJC{Europ.\ J.~Combin. }

\bibitem{Bollobas-Thomason}
B. Bollobas and C. Thomason, Highly linked graphs, \COM \textbf{16} (1996) 313--320.

\bibitem{Chakravarti}
K. Chakravarti and N. Robertson, Covering three edges with a bond in a nonseparable graph,
in: Deza and Rosenberg (Eds.), Ann. of Discrete Math., (1979) p. 247.

\bibitem{Chen}
G. Chen, R. Gould, X. Yu, Graph connectivity after path removal, \textit{Combinatorica} \textbf{23} (2003) 185--203.

\bibitem{Dirac} G. A. Dirac, In abstrakten Graphen vorhandene vollstˆndige 4-Graphen und ihre Unterteilungen, \textit{Math. Nachr.} \textbf{22} (1960) 61--85.


\bibitem{Ellingham}
M.N. Ellingham, M.D. Plummer, and G. Yu, Linkage for the diamond and the path with four vertices, \textit{J. of Graph Theory} \textbf{70} (2011) 241--261.

\bibitem{Faudree} 
R.J. Faudree, Survey on results on k-ordered graphs, \DM  \textbf{229} (2001) 73--87.

\bibitem{Frohlich} J. Fr\"{o}hlich, K. Kawarabayashi, T. M\"{u}ller, J. Pott, and P. Wollan,  Linkages in large graphs of bounded tree-width,  	arXiv:1402.5549 (manuscript).

\bibitem{Goddard} W. Goddard, 4-connected maximal planar graphs are 4-ordered, \DM 257(\textbf{2--3}) (2002) 405--410.

\bibitem{Gould}
R. Gould, A look at cycles containing specified elements of a graph, \DM 309(\textbf{29}) (2009) 6299--6311.

\bibitem{Jung}
H. Jung, Eine Verallgemeinerung des n-fachen zusammenhangs f\"{u}r Graphen, \textit{Math. Ann.} \textbf{187} (1970) 95--103.

\bibitem{Kawarabayashi}
K. Kawarabayashi, A. Kostochka, G. Yu, On Sufficient Degree Conditions for a Graph to be $k$-linked, \textit{Combin. Probab. Comput.} \textbf{15} (2006) 685--694.

\bibitem{KO} K. Kawarabayashi and K. Ozeki, Non-separating subgraphs after deleting many disjoint paths, \JCTB \textbf{101} (2011) 54--59.

\bibitem{Kriesell} M. Kriesell, Induced paths in 5-connected graphs, \JGT {\bf 36} (2001) 52-58.


\bibitem{KostochkaYu}
A. Kostochka and G. Yu, An extremal problem for H-linked graphs, \JGT \textbf{50} (2005) 321--339. 

\bibitem{Lovasz} L. Lov\'{a}sz, Problems in graph theory, in: M. Fielder (Ed.), Recent Advances in Graph Theory, Acadamia Prague (1975).

\bibitem{JieMa} J. Ma, A note on Lov\'{a}sz removable path conjecture, \textit{J. Comb.} \textbf{2}(1) (2011) 103--109.

\bibitem{Mukae}
R. Mukae and K. Ozeki, 4-connected triangulations and 4-orderedness, \DM 310 \textbf{(17-18)} (2010) 2271-2272.

\bibitem{NgSchultz}
L. Ng and M. Schultz, k-ordered Hamiltonian graphs, \JGT \textbf{24} (1997) 45--57. 

\bibitem{seymour}
P. D. Seymour, Disjoint Paths in graphs, \textit{Discrete Math} \textbf{29} (1980) 371--378.

\bibitem{shiloach}
Y. Shiloach, A polynomial solution to the undirected two paths problem, \textit{J. Assoc. Comput. Mach.} \textbf{27} (3) (1980) 445--456.

\bibitem{3Linked}
R. Thomas and P. Wollan, The extremal function for 3-linked graphs, \JCTB \textbf{98} (2008) 939--971.

\bibitem{Thomas-Wollan}
R. Thomas and P. Wollan, An improved linear edge bound for graph linkage, \textit{Europ. J. Combinatorics} \textbf{26} (2005) 309--324.

\bibitem{Thomassen} C. Thomassen, 2-linked graphs, \textit{\EuropJC} \textbf{1} (1980) 371--378. 

\bibitem{Tutte} W. Tutte, How to draw a graph, \textit{Proc. London Math. Soc.} \textbf{13} (1963) 743--767.

\bibitem{Yu-Disjoint1} X.Yu, Disjoint paths in graphs I, 3-planar graphs and basic obstructions, \textit{Ann. Comb.} \textbf{7(1)} (2003) 89--103.

\bibitem{Yu-Disjoint2} X. Yu, Disjoint paths in graphs II, a special case, \textit{Ann. Comb.} \textbf{7(2)} (2003) 105--126.

\bibitem{Yu-Disjoint3} 
X. Yu, Disjoint paths in graphs III, Characterization, \textit{Ann. Comb.} \textbf{7(2)} (2003) 229--246.
 
\end{thebibliography}
\end{document}